\theoremstyle{plain} % The default theorem style - for your everyday theorem environments
\newtheorem{thm}{Theorem}[subsection] % For theorems.  [section] keeps section # in the numbering.
\newtheorem{cor}[thm]{Corollary} % For corollaries.  [thm] keeps us within the numbering for thm.
\newtheorem{lem}[thm]{Lemma} % For lemmas.  [thm] keeps us within the numbering for thm.
\newtheorem{prop}[thm]{Proposition} % For propositions.  [thm] keeps us within the numbering for thm.
\theoremstyle{definition} % For definition enviroments
\newtheorem{defn}[thm]{Definition} % For definitions.  [thm] keeps us within the numbering for thm.
\theoremstyle{remark} % For definition enviroments
\newtheorem*{rem}{Remark} % For remarks.  No numbering.
\newcommand{\Z}{\mathbb{Z}}     % Integers
\newcommand{\Q}{\mathbb{Q}} % Rational Numbers
\newcommand{\PP}{\mathbb{P}}    % Projective P
\newcommand{\sA}{\mathcal{A}}
\newcommand{\sB}{\mathcal{B}}
\newcommand{\sC}{\mathcal{C}}
\newcommand{\sF}{\mathcal{F}}
\newcommand{\sG}{\mathcal{G}}
\newcommand{\sH}{\mathcal{H}}
\newcommand{\sI}{\mathcal{I}}
\newcommand{\sK}{\mathcal{K}}
\newcommand{\sO}{\mathcal{O}}
\newcommand{\sS}{\mathcal{S}}
\newcommand{\sX}{\mathcal{X}}
\newcommand{\sY}{\mathcal{Y}}
\newcommand{\fC}{\widetilde{\mathcal{C}}}
\newcommand{\fK}{\widetilde{\mathcal{K}}}
\newcommand{\lra}{\longrightarrow}
\newcommand{\vir}{\mathrm{vir}}
\newcommand{\virclass}[1]{{[{#1}]^\vir}}            % Virtual fundamental class
\newcommand{\KSTACK}[4]{{\sK_{#1,#2}(#3,#4)}}   % Moduli space of twisted stable maps into #3
\newcommand{\kstack}{\KSTACK{g}{n}{X}{\beta}}   % The most common use of the above
\newcommand{\VKstack}[4]{{\fK_{#1,#2}(#3,#4)}}  % Moduli space of twisted stable maps into #3
\newcommand{\vkstack}{\VKstack{g}{n}{X}{\beta}} % The most common use of the above
\newcommand{\CIS}[2]{{\sI_{#1}(#2)}}            % The cyclotomic inertia stack of #2
\newcommand{\cis}{\CIS{\mu}{X}}         % The most common use of the above
\newcommand{\RCIS}[2]{{\overline{\sI}_{#1}(#2)}}    % The rigidfied cyclotomic inertia stack of #2
\newcommand{\rcis}{\RCIS{\mu}{X}}           % The most common use of the above
\newcommand{\thickslash}{\mathbin{\!\!\pmb{\fatslash}}} % Is this the slash we were looking for?
\begin{document}
% TITLE SETUP
%************************************************************************************************************
\title{Very Twisted Stable Maps}
%\author{Qile Chen, Steffen Marcus, Henning \'Ulfarsson}
\date{\today} % Insert a date into {} if you wish
\author[Q. Chen]{Qile Chen}
\address{Qile Chen, Department of Mathematics, Box 1917, Brown University,
Providence, RI, 02912, U.S.A}
\email{q.chen@math.brown.edu}
\author[S. Marcus]{Steffen Marcus}
\address{Steffen Marcus, Department of Mathematics, Box 1917, Brown University,
Providence, RI, 02912, U.S.A}
\email{ssmarcus@math.brown.edu}
\author[H. \'Ulfarsson]{Henning \'Ulfarsson}
\address{Henning \'Ulfarsson, School of Computer Science, Reykjavik University,
\mbox{Reykjavik}, Iceland}
\email{henningu@ru.is}
\begin{abstract}
Let $X$ be a smooth projective Deligne-Mumford stack over an algebraically closed field $k$ of characteristic $0$.  In this paper we construct the moduli stack $\widetilde{\sK}_{g,n}(X,\beta)$ of \emph{very twisted stable maps}, extending the notion of twisted stable maps from \cite{AV} to allow for generic stabilizers on the source curves.  We also consider the Gromov-Witten theory given by this construction.
\end{abstract}
\maketitle
\setcounter{tocdepth}{1}
\tableofcontents
%
%
% INTRODUCTION
%************************************************************************************************************
\section{Introduction}
\label{sec:1}

%These notes are an amalgamation of Dan's notes to us from our last meeting, and some of our outlining/understanding of the problem so far.

Throughout, we let $X$ be a smooth, projective Deligne-Mumford stack (DM-stack from now on)
over an algebraically closed field $k$ of characteristic 0.

The Gromov-Witten theory of orbifolds was first introduced in the symplectic setting in \cite{CR}.  This was followed by an adaptation to the algebraic setting in \cite{AGV2} and \cite{AGV}, where the Gromov-Witten theory of DM-stacks was developed, and heavy use is made of the moduli stack of twisted stable maps into $X$, denoted $\kstack$.  This stack was constructed in \cite{AV} and is the necessary analogue of Kontsevich's moduli stack of stable maps for smooth projective varieties when replacing the variety with a DM-stack.  The main purpose of this note is to provide a further extension of these spaces by allowing generic stabilizers on the source curves of the twisted stable maps.

Following \cite{AGV}, we have a diagram:
\[
\xymatrix{
     **[l]\Sigma_i^\sC \subset \sC \ar[r]^f \ar[d] & X\\
     \kstack\\}% \ar[r]^{e_i} &\rcis\\}
\]
where $\Sigma_i^\sC \subset \sC \overset{f}{\longrightarrow} X$ is the universal $n$-pointed twisted stable map.  This gives rise to evaluation maps $e_i:\kstack\lra\rcis$ mapping into the rigidified cyclotomic inertia stack $\rcis$ of $X$. If $\gamma_1,\ldots,\gamma_n \in A^\ast(\rcis)_\Q$ then the Gromov-Witten numbers are defined to be
\[
\left<\gamma_1,\ldots,\gamma_n\right>^X_{g,\beta} = \int\limits_\virclass{\kstack}\prod_i e_i^\ast\gamma_i
\]
where $\virclass{\kstack}$ is the virtual fundamental class of $\kstack$ as in \cite{BF}.

In the case when $X$ is a $3$-dimensional Calabi-Yau {\em variety}, we also have Donaldson-Thomas theory (originating from \cite{T},\cite{DT}) which in contrast to Gromov-Witten theory gives invariants by counting sheaves.  We get the following diagram:
\[
\xymatrix{
**[r] \sY \subset  X\times\sH\text{ilb}_{\chi,\beta}(X) \ar[d] &\\
\sH\text{ilb}_{\chi,\beta}(X) & \\}
\]
where the virtual dimension of $\sH\text{ilb}_{\chi,\beta}(X)$ is zero as in \cite{BF}.  The conjectural Donaldson-Thomas / Gromov-Witten correspondence of \cite{MNOP} predicts a correspondence (in the case n=0):
\[
\xymatrixcolsep{5pc}\xymatrix{\Bigg\{\displaystyle\int\limits_{\virclass{\sK_{g,0}(X,\beta)}}\!\!\!\!\!\!\!\!\!\!\!1\,\,\,\,\,
=GW(g,\beta)\Bigg\} \ar@{<->}[r] &\Bigg\{\displaystyle \int\limits_{\virclass{\sH\text{ilb}_{\chi,\beta}(X)}} \!\!\!\!\!\!\!\!\!\!\!\!1\,\,\,\,\,\, = DT(\chi,\beta)\Bigg\}}.
\]
This correspondence is manifested in a subtle relationship between generating functions of the invariants.
One wishes to discover a similar correspondence when $X$ is a 3-dimensional Calabi-Yau {\em orbifold}. The Hilbert scheme of a stack was constructed by Olsson and Starr \cite{OS03}, but notice that in general it contains components corresponding to substacks with nontrivial generic stabilizers.
In our definition of $\kstack$ the twisted curves used as the sources of our maps have stacky structure only at the nodes and marked points.  To allow for the above correspondence, one needs to extend the notion of the space of twisted stable maps as defined in \cite{AV} and \cite{AGV} so that our twisted curves have generic stabilizers.  We approach this problem in three steps:

\begin{enumerate}
    \renewcommand{\labelenumi}{(\arabic{enumi})}
    \item Construct the stack $\sG_X$ of \'etale gerbes in $X$ as a rigidification of the stack $\sS_X$ of subgroups of the inertia stack $\sI(X)$.  We exhibit $\sS_X$ as the universal gerbe sitting over $\sG_X$, giving a diagram:
     \[
    \xymatrix{
    \sS_X \ar[d]^\alpha \ar[r]^\phi & X\\
    \sG_X.}
    \]
    This is done in section \ref{sec:2}.
    \item Define the moduli stack of very twisted stable maps $\vkstack$ by setting
    \[
    \vkstack := \coprod_{\beta_\sG} \KSTACK{g}{n}{\sG_X}{\beta_\sG}
    \]
    where the disjoint union is taken over all curve classes $\beta_\sG\in H^\ast(\sG_X)$ such that $\phi_\ast\alpha^\ast\beta_\sG = \beta$.  By pulling back, we see that each $\KSTACK{g}{n}{\sG_X}{\beta_\sG}$ has two different universal objects sitting above it, one giving twisted stable maps into $\sG_X$ and the other giving ``very twisted" stable maps into $X$.  Through our disjoint union, we get the two corresponding universal objects sitting above $\vkstack$.  This is done in section \ref{sec:3}.
    \item Relate the Gromov-Witten theory of $X$ given by the two different universal objects sitting above $\vkstack = \coprod_{\beta_\sG} \KSTACK{g}{n}{\sG_X}{\beta_\sG}$ and show that they give the same invariants.  This is done in section \ref{sec:4}.
\end{enumerate}

\subsection{Acknowledgements}
We would like to thank Jim Bryan and Rahul Pandharipande for suggesting the problem, and Angelo Vistoli for his great help in understanding the constructions of section \ref{sec:2}.  We would also like to thank our advisor, Dan Abramovich, for his continuous support and encouragement.

Finally, Ezra Getzler has suggested earlier that very twisted curves should be of independent interest; this note may give further motivation for following this suggestion.
%
%
% Constructing the stacks S_x and G_x
%************************************************************************************************************

\section{Constructing the stacks $\sS_X$ and $\sG_X$}
\label{sec:2}

%%%%% Comment ends here

In this section we construct DM-stacks $\sS_X$ and $\sG_X$ and show that $\sG_X$ is presented as the rigidification of $\sS_X$ along a group scheme.  Our construction follows closely the construction of $\cis$ and its rigidification,
from \cite{AGV}.

%
%
% The stack of subgroups
%************************************************************************************************************
\subsection{The stack of subgroups of the inertia stack}

Recall that $X$ is a smooth, projective DM-stack over an algebraically closed field $k$ of characteristic $0$.  We begin by defining the stack $\sS_X$ of subgroups of the inertia stack $\sI(X)$.

\begin{defn}\label{defn:Sx}% def of Sx
We define a category $\sS_{X}$, fibered over the category of schemes, as follows:
\begin{enumerate}
      \renewcommand{\labelenumi}{(\alph{enumi})}
         \item An object of $\sS_{X}(T)$ consists of a pair $(\xi,\alpha: G \hookrightarrow \sA ut_{T}(\xi))$, where $\xi \in X(T)$, the arrow $\alpha$ is an injective morphism of group schemes, and $G$ is finite and \'etale over $T$.
         \item An arrow sitting over $T \lra T'$ from $(\xi, \alpha)\in\sS_X(T)$  to $(\xi',\alpha')\in\sS_X(T')$ is a morphism $F: \xi \lra \xi'$ making the following diagram commutative:
           \[
           \xymatrix{
              G\ar[d]^\alpha \ar[r] & G'\ar[d]^{\alpha^{\prime}}\\
               \sA ut_{T}(\xi) \ar[d] \ar[r] & \sA ut_{T^{\prime}}(\xi^{\prime}) \ar[d]\\
               T \ar[r]                    & T^{\prime}}
            \]
            where $\sA ut_{T}(\xi)\lra \sA ut_{T'}(\xi')$ is the morphism induced by $F$ and the diagram is cartesian.
\end{enumerate}
\end{defn}

\begin{comment} %%%%% Comment starts here
Should we write out how the map on automorphism groups is induced?
\end{comment} %%%%% Comment ends here

This definition follows Definition 3.1.1 in \cite{AGV} except we are allowing our subgroup to vary.

There are several things that need mention or proof.

\begin{rem}
Notice that there is an obvious morphism (of fibered categories) $\sS_{X}\lra X$ which sends $(\xi, \alpha)$ to $\xi$.
\end{rem}

\begin{prop}
$\sS_{X}$ is fibered in groupoids over the category of schemes.
\end{prop}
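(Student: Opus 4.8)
The plan is to verify the two standard axioms for a category fibered in groupoids over the category of schemes: the existence of pullbacks (cartesian lifts), and the unique factorization property. Composing the forgetful morphism $\sS_X \lra X$ from the preceding remark with the structure functor $X \lra (\mathrm{Sch})$ already exhibits $\sS_X$ as a category over schemes, so only these two axioms remain. Throughout I would lean on two facts: that $X$ is a stack, hence itself fibered in groupoids, and that automorphism group schemes are compatible with base change, i.e. $\sA ut_T(f^*\xi') \cong T \times_{T'} \sA ut_{T'}(\xi')$ for $f : T \lra T'$.

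For pullbacks, fix $f : T \lra T'$ and an object $(\xi', \alpha' : G' \hookrightarrow \sA ut_{T'}(\xi'))$ of $\sS_X(T')$. Since $X$ is fibered in groupoids I first pull back $\xi'$ to $\xi := f^*\xi' \in X(T)$ with a cartesian arrow $F : \xi \lra \xi'$ over $f$. Using the base-change isomorphism $\sA ut_T(\xi) \cong T \times_{T'} \sA ut_{T'}(\xi')$, I set $G := G' \times_{T'} T$ and take $\alpha := f^*\alpha'$. Because finiteness, \'etaleness, and the property of being a monomorphism are all stable under base change, $G$ is finite \'etale over $T$ and $\alpha$ is injective, so $(\xi,\alpha)$ lies in $\sS_X(T)$. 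By construction the square
\[
\xymatrix{
G \ar[d]_\alpha \ar[r] & G' \ar[d]^{\alpha'} \\
\sA ut_{T}(\xi) \ar[r] & \sA ut_{T'}(\xi')
}
\]
is cartesian, so $F$ upgrades to an arrow $(\xi,\alpha) \lra (\xi',\alpha')$ in $\sS_X$ over $f$, as required by Definition \ref{defn:Sx}(b).

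For the factorization property, suppose I am given arrows $\phi : (\xi,\alpha) \lra (\eta,\gamma)$ and $\psi : (\zeta,\delta) \lra (\eta,\gamma)$ in $\sS_X$ and a morphism $f$ of schemes from the source of $\phi$ to the source of $\psi$, compatible over the base of $\eta$. The first observation is that the forgetful functor $\sS_X \lra X$ is \emph{faithful}: by Definition \ref{defn:Sx}(b) an arrow of $\sS_X$ is merely an arrow $F$ of $X$ subject to a cartesian condition. Hence any lift of $f$ to $\sS_X$ compatible with $\phi$ and $\psi$ is determined by its underlying arrow in $X$, and uniqueness follows immediately from the corresponding uniqueness in $X$. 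For existence, let $\theta_X : \xi \lra \zeta$ be the unique arrow of $X$ over $f$ with $\psi_X \circ \theta_X = \phi_X$, where subscripts denote underlying arrows in $X$. I then invoke the pasting lemma for cartesian squares: the squares attached to $\phi$ and $\psi$ are cartesian by hypothesis, so in the composite diagram of automorphism sheaves the outer rectangle and the right-hand square are cartesian, whence the left-hand square — which is exactly the square attached to $\theta_X$ — is cartesian as well. Thus $\theta_X$ satisfies Definition \ref{defn:Sx}(b) and furnishes the desired arrow $\theta : (\xi,\alpha) \lra (\zeta,\delta)$.

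The main obstacle, and the only genuinely non-formal input, is the base-change compatibility of the automorphism group schemes, namely $\sA ut_T(f^*\xi') \cong T \times_{T'} \sA ut_{T'}(\xi')$ — equivalently, that the inertia stack $\sI(X)$ commutes with base change. Once this is secured, every remaining step reduces either to the stability of finiteness, \'etaleness, and monomorphisms under base change, or to the elementary pasting calculus for cartesian squares. This parallels, and may be extracted from, the analogous verifications in \cite{AGV} for the cyclotomic inertia stack $\cis$, after which our argument is the evident adaptation allowing the subgroup to vary.
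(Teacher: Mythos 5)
Your proposal is correct and follows essentially the same route as the paper: both reduce everything to the fact that $X$ is itself fibered in groupoids, and both construct the needed lift over $f:T\lra T'$ by setting $\xi=f^*\xi'$ and $G=f^*G'$ and invoking stability of finite, \'etale, injective group homomorphisms under base change. The only difference is one of care rather than strategy: where the paper simply declares that every arrow of $\sS_X$ is cartesian because it is an arrow of $X$, you verify the equivalent unique-factorization axiom explicitly, using faithfulness of $\sS_X\lra X$ for uniqueness and the pasting lemma for cartesian squares to check that the induced factoring arrow still satisfies the cartesian condition of Definition \ref{defn:Sx}(b) --- a detail the paper elides.
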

\begin{proof}
By Proposition 3.22 in Part 1 of \cite{FGIKNV} $\sS_X$ is fibered in groupoids if and only if
\begin{itemize}
\item every arrow is cartesian; and
\item given $\xi'$ over $T'$ and $f : T \to T'$ there exists an arrow $\phi : \xi \to \xi'$ over $f$.
\end{itemize}
The first condition is automatically satisfied since every arrow in $\sS_X$ is an arrow in $X$ which
is fibered in groupoids. For the second we set $\xi = f^*\xi'$ and $G = f^*G'$, so $\alpha$ is automatically
defined. We now clearly have an arrow $(\xi,\alpha) \to (\xi',\alpha')$ making the necessary diagram
commute.\end{proof}

\begin{prop}\label{prop:SXisDM}
The category $\sS_{X}$ is a DM-stack, and the functor $\sS_{X}\lra X$ is representable and finite.
\end{prop}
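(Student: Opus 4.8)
The plan is to prove both assertions at once by showing that the morphism $\sS_{X}\lra X$ is representable and finite; granting this, the DM property is inherited automatically. Indeed, since $X$ is a DM-stack its diagonal is unramified, while a representable morphism has monomorphic—hence unramified—relative diagonal. The diagonal $\Delta_{\sS_X}$ then factors as $\sS_X \xrightarrow{\Delta_{\sS_X/X}} \sS_X\times_X\sS_X \lra \sS_X\times\sS_X$, where the first map is unramified by representability and the second is the base change of the unramified $\Delta_X$; a composite of unramified morphisms is unramified, so $\sS_X$ has unramified diagonal and is therefore a DM-stack. Thus everything reduces to analyzing the fibers of $\sS_X\lra X$.

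The first step is representability. Fix a scheme $T$ and an object $\xi\in X(T)$, i.e.\ a morphism $T\lra X$. I would check that the fiber $\sS_X\times_X T$ is equivalent to a set-valued functor: an automorphism of a pair $(\xi,\alpha)$ lying over $\mathrm{id}_\xi$ induces the identity on $\sA ut_T(\xi)$, and the requirement that the comparison square be cartesian then forces the subgroup datum $\alpha\colon G\hookrightarrow\sA ut_T(\xi)$ to be carried identically to itself. Hence the fiber over $\xi$ has no nontrivial automorphisms and is precisely the functor
\[
T' \longmapsto \{\,\text{finite \'etale subgroup schemes of } \sA ut_{T'}(\xi_{T'})\,\}.
\]
So $\sS_X\lra X$ is representable as soon as this subgroup functor is representable by a scheme.

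The heart of the argument, and the step I expect to be the main obstacle, is to show this subgroup functor is representable by a scheme finite (and \'etale) over $T$. First note that because $X$ is a smooth, separated DM-stack in characteristic $0$, the inertia $\sI(X)\lra X$ is finite and unramified, and by Cartier's theorem a finite unramified group scheme in characteristic $0$ is finite \'etale; hence $\sA ut_T(\xi)\lra T$ is a finite \'etale group scheme. I would then work \'etale-locally on $T$: after a finite \'etale base change $\sA ut_T(\xi)$ trivializes to a constant finite group $\underline{\Gamma}$, whose finite \'etale subgroup schemes over a connected base are exactly the constant subgroups of $\Gamma$, a finite set. The subgroup functor is therefore \'etale-locally the constant scheme attached to $\mathrm{Sub}(\Gamma)$, and since representability, finiteness, and \'etaleness all descend, it is globally representable by a finite \'etale $T$-scheme. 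The delicate point is the descent datum: the monodromy of $\sA ut_T(\xi)/T$ acts on $\mathrm{Sub}(\Gamma)$ through $\mathrm{Aut}(\Gamma)$, and one must verify this action is compatible so that the local finite \'etale schemes glue. This mirrors the corresponding representability and finiteness argument for $\cis$ in \cite{AGV}, the difference being that there one parametrizes cyclic subgroups equipped with a generator, whereas here one parametrizes arbitrary finite \'etale subgroups.

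Finally I would assemble the conclusion: base-changing along an \'etale atlas $U\lra X$, the subgroup functor shows $\sS_X\times_X U\lra U$ is finite \'etale, so $\sS_X\lra X$ is representable and finite. Combined with the reduction of the first paragraph, this also yields that $\sS_X$ is a DM-stack, completing the proof.
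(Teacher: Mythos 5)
Your overall strategy -- reduce everything to showing that $\sS_X\lra X$ is representable and finite, and identify the fiber over $\xi\in X(T)$ with the functor of finite \'etale subgroup schemes of $\sA ut_{T'}(\xi_{T'})$ -- is the same as the paper's and is correct up to that point. The gap is in the step you yourself flagged as the heart of the argument. Your claim that $\sA ut_T(\xi)\lra T$ is finite \emph{\'etale} is false in general: $\sI(X)\lra X$ is finite, unramified and representable, but it need not be flat, because the order of the stabilizer can jump on closed substacks (already for $X=[\A^1/(\Z/2\Z)]$ the inertia has fiber degree $1$ generically and $2$ at the origin). Cartier's theorem gives smoothness of group schemes over a field of characteristic $0$, not flatness over a base, so \'etaleness does not follow from finiteness plus unramifiedness here. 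Consequently there is no \'etale cover of $T$ over which $\sA ut_T(\xi)$ becomes a constant group $\underline{\Gamma}$, the subgroup functor is not \'etale-locally the constant scheme on $\mathrm{Sub}(\Gamma)$, and your final assertion that $\sS_X\times_X U\lra U$ is finite \'etale is also false: the paper's own example in Section 4, where $\sS_X=X\sqcup\widetilde{\PP^1}$ for $X=[E/(\Z/2\Z)]$, has a component of $\sS_X$ supported entirely over the stacky locus, so $\sS_X\lra X$ is finite and unramified but not flat.

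The paper gets around exactly this difficulty by embedding $\sS_X$ into the relative Hilbert scheme $\sH ilb(\sI(X)/X)$: its points are closed subschemes of the fibers of $\sI(X)$ that are \emph{flat} (hence, being unramified, \'etale) over the base, even though the ambient inertia is not flat. Finiteness then comes from the fact that the relevant Hilbert polynomials are integers bounded by the maximal fiber degree $M$ of $\sI(X)$ together with properness of the components, unramifiedness from the rigidity of subschemes of a finite \'etale scheme, and $\sS_X$ is carved out as the open and closed locus where the subscheme is a subgroup scheme. If you want to keep your more hands-on approach, you would need to replace the \'etale-local trivialization of $\sA ut_T(\xi)$ by some device of this kind that parametrizes only the subfamilies that are flat over the base.
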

\begin{proof}
We prove that the functor $\sS_{X}\lra X$ is representable and finite, which implies that
$\sS_{X}$ is a DM-stack.
The inertia stack $\sI(X) \to X$ is finite, unramified and representable, since $X$ is assumed to be
a separated DM-stack. We can therefore let $M$ be the maximal degree of a fiber of $\sI(X)$. We
begin by showing that the relative Hilbert scheme $\sH ilb(\sI(X)/X)$ is unramified and finite:
\begin{itemize}
\item The fiber over a geometric point of $X$ is unramified since a subscheme of a
finite \'etale scheme has only trivial infinitesimal deformations.
\item The relevant Hilbert polynomials here are just integers since the schemes in question are all finite.
These integers are bounded above by $M$, so there are only finitely many of them. This together with the
fact that each component is proper implies finiteness.
\end{itemize}
The relative Hilbert scheme is representable by definition. The last part we need is to notice that
$\sS_X$ is an open and a closed subscheme in $\sH ilb(\sI(X)/X)$ since the condition on a finite subset to be a group is an open and closed condition.
%Here is an alternative but slightly incomplete proof.
\begin{comment}%%%% Comment starts here
We prove the second part; that the functor $\sS_{X}\lra X$ is representable and finite, which implies that
$\sS_{X}$ is a DM-stack. Now consider a morphism $T \to X$ corresponding to an object $\xi \in X(T)$.
Let $H$ be the Hilbert scheme of all subgroupschemes $G \to \sA ut_T(\xi)$ that are finite and
\'etale over $T$. We want to show that the fibered product $\sS_{X} \times_X T$ is an open and closed
subscheme of the finite $T$-scheme $\hom_T(H,\sA ut_T(\xi))$.
\end{comment}%%%% Commnt ends here
\end{proof}

\begin{prop}
 \begin{enumerate}
   \item When $X$ is smooth, $\sS_{X}$ is smooth as well.
   \item When $X$ is proper, $\sS_{X}$ is proper as well.
 \end{enumerate}
\end{prop}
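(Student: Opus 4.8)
The plan is to handle the two parts by entirely different means: properness will fall out formally from the finiteness already established, whereas smoothness requires a genuine deformation-theoretic argument.

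For part (2), recall from Proposition \ref{prop:SXisDM} that $\sS_X \lra X$ is representable and finite, hence proper, and that this was proved using only that $X$ is a separated DM-stack. If $X$ is proper over $k$, then $\sS_X \lra X \lra \operatorname{Spec} k$ is a composite of proper morphisms, hence proper. This settles (2) with no further work.

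For part (1) I would verify the infinitesimal lifting criterion for smoothness directly on the stack $\sS_X$; this is legitimate because $\sS_X$ is a DM-stack locally of finite type over $k$, being finite over the smooth (hence locally finite type) stack $X$. The key preliminary move is to repackage an object of $\sS_X(T)$: the datum of $\xi \in X(T)$ together with an inclusion $\alpha \colon G \hookrightarrow \sA ut_T(\xi)$, with $G$ finite \'etale over $T$, is the same as a representable morphism into $X$ from the classifying stack $BG$ of $G$ over $T$ (with $G$ acting trivially on $T$), the injectivity of $\alpha$ being exactly representability. This is the precise sense in which our construction generalizes the cyclotomic inertia stack $\cis$ of \cite{AGV}, with the cyclic groups $\mu_r$ replaced by arbitrary finite \'etale $G$. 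Now let $A' \twoheadrightarrow A$ be a square-zero extension of local Artinian $k$-algebras with kernel $I$, and let $(\xi, G \hookrightarrow \sA ut_A(\xi))$ be an $A$-point of $\sS_X$. Since $G$ is finite \'etale over $\operatorname{Spec} A$ and $\operatorname{Spec} A \hookrightarrow \operatorname{Spec} A'$ is a nilpotent thickening, topological invariance of the \'etale site yields a unique finite \'etale group scheme $G'$ over $A'$ lifting $G$, and it remains to lift the representable morphism $g \colon BG \lra X$ (over $A$) to a morphism over $A'$. Because $X$ is smooth, its cotangent complex is the locally free sheaf $\Omega_X$, so the obstruction to extending $g$ across the square-zero extension lies in $H^1\bigl(BG,\, g^\ast T_X \otimes I\bigr)$, which is the group cohomology $H^1(G', g^\ast T_X \otimes I)$. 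Since $G'$ is finite and $|G'|$ is invertible in characteristic $0$, this higher cohomology vanishes, the obstruction is zero, and the lift exists; representability is an open condition and is preserved. Hence every such lift exists and $\sS_X$ is smooth.

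I expect the main obstacle to be the deformation-theoretic core of part (1). The naive strategy of lifting $\xi$ first and then reinserting $G$ fails: not every lift of $\xi$ carries the subgroup $G$, only the ones compatible with the $G$-action survive, so the lift of $\xi$ must be coupled to the lift of $G$. The classifying-stack reformulation is exactly what makes this coupling automatic, by turning the problem into the single question of deforming one morphism $BG \to X$. The genuinely essential input is then the vanishing of $H^{>0}(G', -)$ for finite $G'$ with coefficients on which $|G'|$ acts invertibly, which is where characteristic $0$ is used and which reduces the whole statement to the smoothness of $X$; one should also take care to confirm that representability (equivalently, faithfulness on the closed fiber) is genuinely preserved under the lift.
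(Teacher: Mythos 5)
Your part (2) is the paper's argument in all but packaging: both reduce to the properness of $\sS_X \lra X$ already secured in Proposition \ref{prop:SXisDM} (the paper phrases this via $\sS_X$ being closed in $\sH ilb(\sI(X)/X)$, you via finiteness of $\sS_X \lra X$ --- the same fact) and then compose with $X \lra \operatorname{Spec} k$. For part (1) you take a genuinely different route. The paper works with explicit local charts: writing $X = [U/G_x]$ near a geometric point, it exhibits a chart of $\sS_X$ of the form $[U^H/N(H)]$ and proves smoothness of the fixed locus $U^H$ by a tangent-space dimension count, using reductivity of $H$ to split the cotangent space into invariant generators and a complementary representation. You instead verify the infinitesimal lifting criterion globally, after repackaging an object of $\sS_X(T)$ as a representable morphism $B_T G \lra X$; note that this repackaging is precisely the equivalence $\sS_X \simeq \sS_X'$ that the paper only establishes in the subsection \emph{following} this proposition, so you are importing a later result, though there is no circularity. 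The essential characteristic-$0$ input is the same in both proofs, appearing as complete reducibility of $H$-representations in the paper and as the vanishing of $H^{>0}(G,-)$ in yours. Your approach is chart-free and arguably cleaner; the paper's buys the explicit local presentation $[U^H/N(H)]$, which it reuses in the tangent bundle lemma of Section \ref{sec:4}. Your argument does go through, provided you record that the obstruction group is the cohomology of $B_A G$, i.e.\ group cohomology of $G$ with coefficients in the $A$-module $g^\ast T_X \otimes I$ (not of $G'$), and that representability of the lift is automatic because $B_A G \hookrightarrow B_{A'}G'$ induces a bijection on geometric points and faithfulness of a morphism of DM-stacks is detected on stabilizers of geometric points.
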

%\begin{enumerate}
%\item \textbf{Idea: Use a Lemma from one of Dan's papers: any DM-stack is locally a quotient. This can be used to describe $\sS_\sX$ locally.}
%Local analysis: if $X=[V/G_x]$ and $H\subset G_x$, then locally $\sS_X=[V^H/NH]$ and the tangent space $T_V^H=T_{V^H}$ has the right dimension.
\begin{proof}
We start with (1).  Let $x$ be a geometric point of $X$ and $G_x$ its stabilizer group.  We know from \cite{AOV} that we can view $X$ in a local chart around $x$ as $[U/G_x]$.  $\sS_X$ has a local chart $[U^H/N(H)]$, where $H\subset G_x$ is a subgroup and we quotient out by the normalizer subgroup $N(H)$. To prove that this chart is smooth note that $T_{U^H}=(T_U)^H$ and that $\dim (T_U)^H = \dim (T_U^*)^H$ and
$T_V^*$ is generated by
\[
\overline{y_1}, \dotsc, \overline{y_d}, \overline{y_{d+1}}, \dotsc, \overline{y_n},
\]
where $\overline{y_1}, \dotsc, \overline{y_d}$ are the invariant generators. Since $H$ is reductive
we have a section of the projection map $\mathfrak{m} \to \mathfrak{m}/\mathfrak{m}^2$ which allows us
to lift all the generators to
\[
y_1, \dotsc, y_d, y_{d+1}, \dotsc, y_n,
\]
in such a manner that $y_1, \dotsc y_d$ are still the invariant generators and $y_{d+1}, \dotsc, y_n$ span a finite-dimensional representation. Let $J$ be the ideal generated
by $y_{d+1}, \dotsc, y_n$. Then the quotient by that ideal corresponds to an invariant subscheme, which is smooth since
formally it is generated by $y_{1}, \dotsc, y_d$. This proves that the dimension of $U^H$ is at least, and thus equal, to
$\dim T_{U^H}$.\

(2) follows from the proof of Proposition \ref{prop:SXisDM}, since $\sS_X$ is a closed subscheme
of $\sH ilb_X(\sI(X))$, which is proper over $X$.
\end{proof}
\begin{comment}
By Lemma FIND REFERENCE we can write $X = [V/G]$ around a point $x \in V$, where $G$ is the stabilizer
of $x$. Then locally $\sS_X$ can be described as $[V^H/N(H)]$, where $H \subset G$ is a subgroup. Now
the tangent space $T_V^H = T_{V^H}$ has the correct dimension.
\end{comment}

%
%
%
% Alternative description of S_x
%************************************************************************************************************
\subsection{Alternative description of $\sS_{X}$}

It will be useful to provide another, less obvious, description of $\sS_X$.  We start with a definition.

\begin{defn}\label{gerbe}
An \emph{\'etale gerbe} over a scheme $S$ is an DM-stack $\sA$
over $S$ such that
\begin{enumerate}
      \item there exists an \'etale covering {$S_{i} \lra S$} such that each $\sA(S_{i})$ is not empty.
      \item given two objects $a$ and $b$ of $\sA(T)$, where $T$ is an $S$-scheme, there exists a covering {$T_{i} \lra T$} such that the pullbacks $a_{T_{i}}$ and $b_{T_{i}}$ are isomorphic in $F(T_{i})$.
\end{enumerate}
More generally, a stack $\sF$ over a stack $\sX$ is an \'etale
gerbe if for any morphism $V\lra\sX$ with $V$ a scheme, the
pull-back $\sF_{V}\lra V$ along $V\lra\sX$ is an \'etale gerbe.
\end{defn}

%\textbf{(the definition of gerbes above is different from \cite{AGV} where
%the gerbe is a fppf stack which might not be a DM-stack (correct?) Or maybe the fppf assumption does not effect our
%result, and we can still use them.)}

\begin{defn}\label{S'x}
We define a 2-category $\sS_{X}'$, fibered over the category of
schemes, as follows:
\begin{enumerate}
      \renewcommand{\labelenumi}{(\alph{enumi})}
      \item An object of $\sS_{X}'(T)$ consist of representable morphisms $\phi : \sA \lra X$, where $\sA$ is
               an \'etale gerbe over T, with a section $\tau : T\lra \sA$.
               \[\xymatrix
              {\sA \ar[r]^{\phi}\ar[d]  & X\\
               T. \ar^{\tau}@/^/[u]}\]

    \item A 1-arrow $(F, \rho): (\sA, \phi)\lra(\sA^{\prime},\phi^{\prime})$ consists of a morphism $F: \sA \lra \sA^{\prime}$
    over some $f:T \lra T^{\prime}$ making a cartesian square,
    and a natural transformation $\rho:\phi\Rightarrow\phi^{\prime}\circ F$
    making the following diagram commutative:
    \[
    \xymatrix{
                &&X \\
        \sA \ar@/^/[urr]^{\phi} \ar[r]_F \ar[d]& \sA' \ar@/_/[ur]_{\phi'} \ar[d]&\\
        T  \ar[r] &T'. \\}
        %T \ar@/^1pc/[u]^{\tau} \ar[r] &T' \ar@/_1pc/[u]_{\tau'}\\}
    \]

    \item A 2-arrow $(F,\rho) \lra (F_{1},\rho_{1})$ is an natural transformation $\sigma:F \Rightarrow F_{1}$ giving an equivalence, and compatible with $\rho$ and $\rho_{1}$ in the sense that the following diagram is commutative:
    \[
    \xymatrix{&\phi\ar[ld]_{\rho} \ar[rd]^{\rho_{1}}&\\
          \phi\circ F \ar[rr]^{\phi(\sigma)} && \phi^{\prime}\circ F_{1}.}
          \]
\end{enumerate}
\end{defn}

\begin{rem}

\begin{enumerate}
\item By Lemma 3.21 in \cite{LMB}, the section $\tau$ gives a
neutral section, hence the gerbe $\sA$ is isomorphic to
$\sB_{T}G_{\tau}$ where $G_{\tau}=\sA ut_{T,\sA}(\tau,\tau)$ is
\'etale over T.  The section $\tau$ corresponds to the trivial
$G_{\tau}$-torsor $G_{\tau}\lra T$.

\item Since T gives a moduli space of $\sA$, the arrow
$\sA\lra T$ is proper. This implies that the group scheme
$G_{\tau}$ is finite over T.
\end{enumerate}
\end{rem}

%%%%%%% A proof of the question "why the group scheme is etale and finite over T?"
% First note that the group G act on T trivially, hence the identity arrow $T\lra T$ gives a quotient of T by G.
% Now by Proposition 2.11 of \cite{V}, T is a moduli space of \sA in the sense of definition 2.1 of \cite{V}. Hence the
% arrow $\sA \lra T$ must be proper. Using Lemma 1.13 of \cite{V} we see that the group G is finite.
%
% For etaleness, note that the composition $T\lra \sA \lra T$ is identity, hence is etale, and $\sA\lra T$ is etale,
% and hence the section $\tau: T\lra \sA$ is also etale. Note that $G \lra T$ is the pullback of $T \lra \sA$
% along $T\lra\sA$, hence is etale.

\begin{lem}
The 2-category $\sS'_{X}$ is equivalent to a category.
\end{lem}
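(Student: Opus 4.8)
The goal is to show that the 2-category $\sS'_X$ is equivalent to an ordinary (1-)category. The standard criterion for this, as used in the analogous situation in \cite{AGV} for the cyclotomic inertia stack, is to verify that every 2-arrow is invertible and that each 1-arrow has no nontrivial automorphisms; equivalently, the 2-automorphism group of any object and any 1-arrow is trivial. Once 2-arrows are shown to be unique when they exist, the 2-morphism spaces become either empty or singletons, and the 2-category collapses to a 1-category (its truncation).

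**The plan** is as follows. First I would unwind the data of a 2-arrow $\sigma: F \Rightarrow F_1$ between two 1-arrows $(F,\rho), (F_1,\rho_1): (\sA,\phi) \to (\sA',\phi')$. By the first remark following Definition \ref{S'x}, the section $\tau$ trivializes $\sA$, so $\sA \simeq \sB_T G_\tau$ with $G_\tau$ finite \'etale over $T$, and similarly $\sA' \simeq \sB_{T'} G_{\tau'}$. The key structural input is that $F$ and $F_1$ sit in cartesian squares over $f: T \to T'$, so they are determined up to the 2-morphism data by how they act on the neutral sections and on the band $G_\tau \to f^* G_{\tau'}$. A natural transformation $\sigma: F \Rightarrow F_1$ over a gerbe banded by a finite \'etale (hence abelian-on-automorphisms, or at least rigid) group amounts to a global section of an appropriate sheaf of groups over $T$. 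The compatibility condition with $\rho, \rho_1$ displayed in part (c) of the definition — namely $\phi'(\sigma) \circ \rho = \rho_1$ — is a commuting triangle that, given $\rho$ and $\rho_1$, pins down $\phi'(\sigma)$ and hence, because $\phi'$ is \emph{representable}, pins down $\sigma$ itself.

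**The crux** is exactly this last point: representability of $\phi': \sA' \to X$ means that $\phi'$ is faithful on automorphisms, so the induced map on 2-arrows $\sigma \mapsto \phi'(\sigma)$ is injective. I would argue that the commutativity constraint forces $\phi'(\sigma)$ to equal the fixed composite $\rho_1 \circ \rho^{-1}$ (here using that $\rho$ is an isomorphism of functors, $\phi$ and $\phi' \circ F$ being objects of the groupoid $X(T)$), and then faithfulness of $\phi'$ upgrades uniqueness of $\phi'(\sigma)$ to uniqueness of $\sigma$. This shows any two 2-arrows $(F,\rho) \to (F_1,\rho_1)$ agree, so 2-morphism sets have at most one element; taking $F = F_1$ and $\rho = \rho_1$ shows the only 2-automorphism is the identity. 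Combined with the invertibility built into part (c) of the definition (every $\sigma$ is required to give an equivalence), this yields that $\sS'_X$ is equivalent to a category.

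**The main obstacle** I anticipate is bookkeeping rather than conceptual: one must be careful that the trivialization via $\tau$ is compatible across the 1-arrow, i.e. that $F$ sends $\tau$ to something isomorphic to $\tau'$ (up to the base change $f$), so that the comparison of $\sigma$ with the section data is well-posed. I would isolate this as the first lemma-step, then let representability of $\phi'$ do the real work. It may be cleanest to phrase the argument functorially on $T$-points and invoke faithfulness of representable morphisms directly, rather than chasing the torsor descriptions of the gerbes explicitly.
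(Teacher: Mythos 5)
Your proposal is correct and is essentially the paper's argument: the paper simply cites Lemma 3.3.3 of \cite{AGV}, whose content is precisely your crux, namely that the compatibility triangle forces $\phi'(\sigma)=\rho_1\circ\rho^{-1}$ and that faithfulness of the representable morphism $\phi'$ then determines $\sigma$ uniquely, so 1-arrows have trivial 2-automorphism groups and the 2-category collapses to its 1-truncation. The preliminary bookkeeping with the trivializations $\sB_T G_\tau$ and bands is not needed, as you yourself note at the end.
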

\begin{proof}
    Lemma 3.3.3 in \cite{AGV} shows that the isomorphism group of a 1-arrow in $\sS'_X$ is trivial.  Since all 2-arrows are isomorphisms, the result follows.
\end{proof}

\begin{defn}
By abuse of notation, we denote by $\sS'_{X}$ the 1-category associated to the above 2-category.  Our arrows become 2-isomorphism classes of 1-arrows.
\end{defn}

\begin{defn}
We define a morphism of fibered categories
$\sS_{X}'\lra \sS_{X}$ as follows:
\begin{enumerate}
      \renewcommand{\labelenumi}{(\alph{enumi})}
      \item Given an object
            \[
            \xymatrix
              {\sA \ar[r]^{\phi}\ar[d]  & X\\
               T}
              \]
            with a section $\tau: T\lra\sA$, we obtain a pair $(\xi, \alpha)$ as follows:  $\xi$ is obtained by composing $\phi$ with $\tau$. Note that $\tau$ gives an element $\zeta_{\tau, T}\in\sA(T)$.  Then $\alpha$ is the associated map of automorphisms
            \[
            G=\sA ut_{T}(\zeta_{\tau, T})\lra \sA ut_{T}(\xi),
            \]
            which is injective since $\phi$ is representable.
      \item Given an arrow $\rho$ as above, we obtain an arrow $F:\phi(\zeta_{\tau,T})\lra\phi^{\prime}(\zeta_{\tau^{\prime},T^{\prime}})$ by
            completing the following diagram:
       \[
       \xymatrix{ \phi(\zeta_{\tau, T})\ar[d]_{\rho} \ar@{.>}[rrdd]^{F}            &&  \\
                    \phi^{\prime}\circ f_{*}(\zeta_{\tau, T})\ar@{=}[d]                  &&  \\
                    \phi^{\prime}((\zeta_{\tau^{\prime}, T^{\prime}})_{T})\ar[rr] && \phi^{\prime}(\zeta_{\tau^{\prime}, T^{\prime}}).
         }
         \]
\end{enumerate}
\end{defn}

The proof of the following proposition is almost the same as
Proposition 3.2.3 in \cite{AGV}.  For completeness we rewrite
it in our case.

\begin{prop}
The morphism $\sS_{X}^{\prime}\lra \sS_{X}$ is an equivalence of fibered categories.
\end{prop}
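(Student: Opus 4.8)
The plan is to establish the equivalence by verifying that the functor $\sS'_X \lra \sS_X$ is fully faithful and essentially surjective; for essential surjectivity I will in effect describe the inverse construction on objects. The central tool is the observation recorded in the Remark above: a section $\tau$ of an \'etale gerbe $\sA \lra T$ neutralizes it, yielding a canonical isomorphism $\sA \cong \sB_{T} G_\tau$ with $G_\tau = \sA ut_{T}(\tau)$ finite and \'etale over $T$, under which $\tau$ corresponds to the trivial $G_\tau$-torsor.

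For essential surjectivity I would start with a pair $(\xi, \alpha : G \hookrightarrow \sA ut_{T}(\xi))$ in $\sS_X(T)$. The injection $\alpha$ equips $\xi$ with an action of the finite \'etale group scheme $G$, hence determines a morphism $\phi_\alpha : \sB_{T} G \lra X$ from the classifying gerbe; the fact that this morphism is representable is exactly the injectivity of $\alpha$. Taking $\tau$ to be the section given by the trivial $G$-torsor produces an object $(\sB_{T} G, \phi_\alpha, \tau)$ of $\sS'_X(T)$. Since $\sA ut_{T}(\tau) = G$ and $\phi_\alpha \circ \tau = \xi$ by construction, applying the functor $\sS'_X \lra \sS_X$ returns $(\xi, \alpha)$ up to canonical isomorphism, so every object of $\sS_X$ lies in the essential image.

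For full faithfulness I would fix objects $(\sA, \phi, \tau)$ over $T$ and $(\sA', \phi', \tau')$ over $T'$ with associated pairs $(\xi, \alpha)$ and $(\xi', \alpha')$, and analyze the map on Hom-sets. Using the neutralizations $\sA \cong \sB_{T} G_\tau$ and $\sA' \cong \sB_{T'} G_{\tau'}$, a $1$-arrow $(F, \rho)$ over $f : T \lra T'$ with $F$ cartesian corresponds to an isomorphism $G_\tau \cong f^{*} G_{\tau'}$, which matches exactly the cartesian condition imposed on arrows of $\sS_X$; moreover a cartesian $F$ necessarily carries the neutral section $\tau$ to $(\tau')_{T}$ up to canonical isomorphism. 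Restricting the natural transformation $\rho$ along these neutral sections then produces a morphism $F_0 : \xi \lra \xi'$ over $f$ compatible with the inclusions $\alpha$ and $\alpha'$, i.e.\ precisely an arrow of $\sS_X$. Conversely, every such $F_0$ lifts to a $1$-arrow $(F, \rho)$, and the earlier Lemma, which gives triviality of the automorphism group of a $1$-arrow in $\sS'_X$, guarantees that this lift is unique up to $2$-isomorphism. Hence the induced map on morphisms is a bijection.

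The step I expect to be the main obstacle is the bookkeeping inside full faithfulness: one must confirm that a cartesian $F$ automatically respects the neutralizing sections, so that $\rho$ genuinely descends to the single morphism $F_0 : \xi \lra \xi'$ with no residual data, and that passage to $2$-isomorphism classes of $1$-arrows agrees with the commutative, cartesian diagram defining arrows in $\sS_X$. The triviality of $1$-arrow automorphisms from the earlier Lemma is exactly what upgrades the correspondence from a surjection to a bijection, and keeping the neutralizations compatible across compositions is the delicate point; this is where the argument closely parallels Proposition 3.2.3 of \cite{AGV}.
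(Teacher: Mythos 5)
Your overall strategy matches the paper's: reduce to the fibers over a fixed base, use the neutralization $\sA \cong \sB_{T}G_{\tau}$ supplied by the section, and build the inverse on objects by converting the $G$-action $\alpha$ on $\xi$ into a morphism $\sB_{T}G \lra X$. However, there is a genuine gap in your faithfulness argument. The earlier Lemma only says that a single $1$-arrow of $\sS'_{X}$ has trivial $2$-automorphism group; that is what makes the $2$-category a $1$-category, but it does not say that two $1$-arrows $(F_{1},\rho_{1})$ and $(F_{2},\rho_{2})$ inducing the same morphism $F_{0}:\xi\lra\xi'$ are $2$-isomorphic, which is what faithfulness requires. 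Moreover, a cartesian $F$ is not pinned down by a group isomorphism $G_{\tau}\cong f^{*}G_{\tau'}$: morphisms of gerbes $\sB_{T}G_{\tau}\lra \sB_{T}f^{*}G_{\tau'}$ over $T$ can differ by twists, and your claim that a cartesian $F$ ``necessarily carries $\tau$ to $(\tau')_{T}$ up to canonical isomorphism'' is not part of the definition of a $1$-arrow and is not automatic (the two sections are only \'etale-locally isomorphic, since $\sA'$ is a gerbe). The paper's mechanism is different and is the real content of the proof: one shows that the value of $\rho$ on an arbitrary $G_{\tau}$-torsor $P\lra U$ is determined by its value on the trivial torsor $G_{\tau}\lra T$, by choosing an \'etale cover $\{U_{i}\lra U\}$ trivializing $P$, using that $X$ is a DM-stack (so arrows are determined \'etale-locally), and comparing along the cartesian arrow $P\lra G_{\tau}$. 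That descent argument, not the automorphism lemma, is what gives faithfulness.

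The same omission recurs in the two constructive steps you assert. That ``$\alpha$ determines a morphism $\phi_{\alpha}:\sB_{T}G\lra X$'' and that ``every $F_{0}$ compatible with the group actions lifts to some $(F,\rho)$'' are exactly the statements the paper proves by descent: for a $G$-torsor $P\lra U$ one uses the isomorphism $G\times_{T}P\cong P\times_{U}P$ to produce descent data for $\xi_{P}$ along the \'etale cover $P\lra U$, checks effectivity to get an object of $X(U)$, and, for fullness, glues the locally defined arrows over a trivializing cover, where the compatibility of $F_{0}$ with the $G$- and $G'$-actions is precisely what makes the construction independent of the chosen trivialization. Your outline correctly identifies where the inverse functor comes from, but without these descent arguments the proof is incomplete, and the uniqueness step as written rests on a misapplied lemma.
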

\begin{proof}
By Proposition 3.36 in Part 1 of \cite{FGIKNV}, it is enough to show that
the induced functor on the fiber $\sS'_{X}(T)\lra\sS_{X}(T)$ is an
equivalence for any given scheme T.
\begin{enumerate}

\item \textbf{The functor is faithful.}

      Assume we are given two elements in $\sS'_{X}(T)$ and a 2-arrow $\rho:\phi\Rightarrow\phi^{\prime}\circ F$ making the following diagram commutative:
    \[
    \xymatrix{
                &&X \\
        \sA \ar@/^/[urr]^{\phi} \ar[r]_F \ar[d]& \sA' \ar@/_/[ur]_{\phi'} \ar[d]&\\
        T \ar[r] &T. \\}
        %T \ar@/^1pc/[u]^{\tau} \ar[r] &T. \ar@/_1pc/[u]_{\tau'}\\}
    \]
      We need to show that for any T-scheme $U$ and any
      $G_{\tau}\times_{T}U$-torsor $P\lra U$, the arrow
      $\rho(P\lra U): \phi(P\lra U)\lra\phi'\circ F(P\lra U)$
      is uniquely determined by $\rho(G_{\tau}\lra T): \phi(G_{\tau}\lra T)\lra\phi'\circ F(G_{\tau}\lra
      T)$. Note that $G_{\tau}\lra T$ is the trivial torsor given by the section
      $\tau$.

      Let $\{U_{i}\lra U\}$ be an \'etale covering such
      that the pull-backs $P_{i}\lra U_{i}$ are trivial (i.e.,
      $P_{i}\backsimeq (G_{\tau}\times_{T}U)\times_{U}U_{i}\backsimeq
      G_{\tau}\times_{T}U_{i}$). Since $X$ is a DM-stack, the arrow
      $\phi(P\lra U)\lra\phi'\circ F(P\lra U)$ is determined by $\phi(P_{i}\lra U_{i})\lra\phi'\circ F(P_{i}\lra
      U_{i})$. Hence, we can assume that $P\lra U$ is trivial. But
      then the cartesian arrow $P\lra G_{\tau}$ and the induced
      diagram
      $$\xymatrix{
            \phi(P\lra U) \ar[r]\ar[d] & \phi'\circ F(P\lra U)\ar[d]\\
            \phi(G_{\tau}\lra T) \ar[r]  & \phi'\circ
            F(G_{\tau}\lra T)
      }$$ proves what we want.

\item \textbf{The functor is fully faithful.}

      Assume we are given an arrow $\beta: \phi(G_{\tau}\lra T)\lra \phi'\circ F(G_{\tau}\lra
      T)$ in $X(T)$ which is compatible with the action of
      $G_{\tau}$ and $G_{\tau'}$. First, consider the trivial torsor
      $P\simeq G_{\tau}\times_{T}U\lra U$, which induces a
      cartesian arrow $P\lra G_{\tau}$.  By the definition of a cartesian arrow, there is a unique arrow $\rho(P\lra U)$ that we can insert in the diagram
      $$\xymatrix{
            \phi(P\lra U) \ar@{-->}[rr]^-{\rho(P\lra U)}\ar[d] && \phi'\circ F(P\lra U)\ar[d]\\
            \phi(G_{\tau}\lra T) \ar[rr]^{\beta}     && \phi'\circ
            F(G_{\tau}\lra T)
      }$$ making it commutative. This arrow $\rho(P\lra U)$ is
      independent of the chosen trivialization, since $\beta$
      compatible with the group action.

      Now, if the $G_{\tau}$-torsor $P\lra U$ is not necessarily
      trivial, choose a covering ${U_{i}\lra U}$ such that the
      pull-backs $P_{i}\lra U_{i}$ are trivial. We have arrows $\phi(P_{i}\lra U_{i})\lra\phi'\circ F(P_{i}\lra
      U_{i}))$ in $X(U_{i})$, and their pullbacks to
      $U_{i}\times_{U}U_{j}$ coincide; hence they glue together to
      given an arrow $\rho(P\lra U): \phi(P\lra U)\lra\phi'\circ F(P\lra
      U)$. It is not hard to see that $\rho(P\lra U)$ does not
      depend on the choice of covering, and defines a 2-arrow $\phi\lra\phi'\circ
      F$ whose image in $\sS_X(T)$ coincides with $\beta$.

\item \textbf{The functor is essentially surjective.}

      Note that since $G_{\tau}$ is \'etale over T, and $\sA\simeq
      \sB_{T}G_{\tau}$, then $T\lra \sA$ gives an atlas of $\sA$. Given an object $(\xi,\alpha: G \hookrightarrow \sA
      ut_{T,X}(\xi))$ of $\sS_{X}(T)$, we need to construct $\phi: \sB_{T}G\lra
      X$, whose image in  $\sS_{X}(T)$ is isomorphic to
      $(\xi,\alpha)$.

      Let $P\lra U$ be a $G\times_{T}U$-torsor, where $U$ is a
      T-scheme. The morphism $G\times_{T}P\lra P\times_{U}P$ given
      by $(g,p)\mapsto(gp,p)$, is an isomorphism. The
      pull-back of $\xi$ to $P\times_{U}P\simeq G\times_{T}P$ via
      the first and the second projection coincide with the
      pull-back $\xi_{G\times_{T}P}$, and the isomorphism is given
      by the projection $G\times_{T}P\lra G$. These give descent
      data for $\xi_{P}$ along the \'etale covering $P\lra
      U$. The descent data are effective and define an object
      $\eta$ of $X(U)$, so we assigned to every object of
      $\sB_{T}G(U)$ an object of $X(U)$.  This extends to a morphism
      of fibered categories $\phi: \sB_{T}G\lra X$.

      Now let $G\lra T$ be the trivial G-torsor over T. We claim
      that $\eta:=\phi(G\lra T)$ of $X(T)$ is isomorphic to $\xi$.
      In fact, the object with descent data defining $\xi$ is
      $\xi_{G}$, with the descent data given by the identity on
      $\xi_{G\times G}$. Those descend to $\eta\simeq\xi$ in
      $X(T)$. Hence the image of $\phi$ in $\sS_{X}(T)$ is
      isomorphic to $(\xi, G)$, as we wanted.
\end{enumerate}
\end{proof}

%
%
% Construction of G_x
%************************************************************************************************************
\subsection{The stack of gerbes in $X$}

In the following we introduce a stack $\sG_{X}$ which is closely related to $\sS_{X}$ and will play an important role in the construction of the moduli stack of very twisted stable maps.

\begin{defn}
Define as before a 2-category $\sG_{X}$ with a functor to the category of
schemes as follows:
\begin{enumerate}
      \renewcommand{\labelenumi}{(\alph{enumi})}
    \item An object over a scheme T is a pair $(\sA, \phi)$ where $\sA$ is an \'etale gerbe over T
    and $\sA\lra X$ is a representable morphism.

    \item A 1-arrow $(F, \rho): (\sA, \phi)\lra(\sA^{\prime},\phi^{\prime})$ consists of a morphism $F: \sA \lra \sA^{\prime}$
    over some $f:T \lra T^{\prime}$ making a cartesian square,
    %(how to define the bands that we want?),
    and a natural transformation $\rho:\phi\Rightarrow\phi^{\prime}\circ F$
    making the following diagram commutative:
    \[
    \xymatrix{
                &&X \\
        \sA \ar@/^/[urr]^{\phi (rep.)} \ar[r]_F \ar[d]& \sA' \ar@/_/[ur]_{\phi' (rep)} \ar[d]&\\
        T \ar[r] &T'.\\}
    \]
    \item A 2-arrow $(F,\rho) \lra (F_{1},\rho_{1})$ is a natural transformation $\sigma:F \Rightarrow F_{1}$ giving an equivalence, and compatible with $\rho$ and $\rho_{1}$ in the sense that the following diagram is commutative:
  $$\xymatrix{&\phi\ar[ld]_{\rho} \ar[rd]^{\rho_{1}}&\\
              \phi\circ F \ar[rr]^{\phi(\sigma)} && \phi^{\prime}\circ F_{1}.}$$
\end{enumerate}
\end{defn}

\begin{comment} %%% comment begins here
\begin{lem}
The 2-category $\sG_{X}$ is equivalent to a category.
\end{lem}
\begin{proof}
    Lemma 3.3.3 in \cite{AGV} shows that the isomorphism group of a 1-arrow in $\sG_X$ is trivial.  Since all 2-arrows are isomorphisms, the result follows.
\end{proof}
\end{comment} %%% comment ends here

\begin{defn}
Exactly as above, the 2-category $\sG_{X}$ is equivalent to a 1-category.  By abuse of notation,
we denote by $\sG_{X}$ its associated 1-category.
\end{defn}

It will follow from our results below on rigidification that $\sG_{X}$ is a DM-stack.

\begin{rem}
There is a tautological functor of categories $\sS_{X}\lra \sG_{X}$, sending the pair $(\phi, \tau)$ in $\sS_X(T)$
to the representable morphism $\phi: \sA\lra X$ in $\sG_{X}(T)$.
\end{rem}

%
%
% Rigidification
%************************************************************************************************************
\subsection{Rigidification}

Rigidification of algebraic stacks was first defined and used in \cite{ACV}, \cite{AOV} and \cite{Ro}.  It was foreseen in \cite{A}.

In this section, we discuss the rigidification of DM-stacks, and prove a proposition that the morphism  $\sS_{X}\lra\sG_{X}$ defined in the above remark gives a rigidification of $\sS_X$ along an appropriate relative group scheme, which all our constructions rely on.

We begin by restating the theorem in \cite{AOV} on the existence of the rigidification of an algebraic stack.  We stick to the case of DM-stacks since this is all we will need for our purposes.  It follows from Theorem 5.1.5 in \cite{ACV} that the rigidification of a DM-stack is a DM-stack, and the quotient map is \'etale.

Let $S$ be a scheme, $X\lra S$ a DM-stack of finite type over
$(Sch/S)$.  Suppose $G\subseteq\sI(X)$, where $\sI({X})$ is the
inertia stack of $X$, and G is a subgroup stack of $\sI({X})$ of finite type and \'etale over $X$.

\begin{thm}[\cite{AOV} Theorem A.1]\label{rigid}
There exists a finite type DM-stack $X\thickslash G$ over S with a
morphism $\rho:X\lra X\thickslash G$ satisfying the following
properties:
\begin{enumerate}
    \item $X$ is an \'etale gerbe over $X\thickslash G$.
    \item For each object $\xi$ of $X(T)$, the morphism of group schemes
    \[
    \rho: \sA ut_{T}(\xi)\lra \sA ut_{T}(\rho(\xi))
    \]
  is surjective with kernel $G_{\xi}$.
\end{enumerate}
Furthermore, if $G$ is finite over $X$ then $\rho$ is proper;
while since G is \'etale, $\rho$ is also \'etale. These
properties characterize $X\thickslash G$ uniquely up to
equivalence.
\end{thm}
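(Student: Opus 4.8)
The plan is to realize $X\thickslash G$ explicitly as a stackification and then check the listed properties one at a time, deducing algebraicity at the very end by transporting an atlas of $X$ along $\rho$. First I would build the underlying prestack. Define a category $\sP$ fibered in groupoids over $(\mathrm{Sch}/S)$ with the same objects as $X$, but with morphisms
\[
\operatorname{Hom}_{\sP(T)}(\xi,\eta) \;=\; \operatorname{Hom}_{X(T)}(\xi,\eta)/G_{\xi},
\]
where $G_{\xi}\subseteq \sA ut_{T}(\xi)$ is the fiber of $G$. Because $G$ is a subgroup stack of $\sI(X)$, conjugation by any isomorphism $f:\xi\to\eta$ carries $G_{\xi}$ into $G_{\eta}$; this makes the left and right quotients coincide and lets composition descend, so that $\sP$ (after sheafifying its $\operatorname{Hom}$-presheaves to make it a prestack) is well defined. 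I would then let $X\thickslash G$ be the stackification of $\sP$, with $\rho:X\lra X\thickslash G$ the tautological map.

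The automorphism statement (2) falls out of the construction: sheafifying $\operatorname{Hom}_{X}(\xi,\xi)/G_{\xi}$ identifies $\sA ut_{T}(\rho(\xi))$ with the quotient group scheme $\sA ut_{T}(\xi)/G_{\xi}$. Since $\sA ut_{T}(\xi)$ is finite \'etale ($X$ being DM) and $G_{\xi}$ is a normal finite \'etale subgroup, this quotient exists as a finite \'etale group scheme, and the induced $\rho:\sA ut_{T}(\xi)\lra\sA ut_{T}(\rho(\xi))$ is exactly the surjection with kernel $G_{\xi}$. For the gerbe statement (1), I would verify the two conditions of Definition \ref{gerbe} for $\rho$: every object of $(X\thickslash G)(T)$ comes, after an \'etale cover of $T$, from an object of $X$ (this is precisely what stackification provides), and any isomorphism $\rho(\xi)\simeq\rho(\xi')$ lifts, after an \'etale cover, to a genuine isomorphism $\xi\to\xi'$ in $X$ (by the sheafification of the $\operatorname{Hom}$'s, the only ambiguity being by $G$). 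The band is $G$, which is \'etale over $X$, so the gerbe is \'etale and $\rho$ is \'etale; when $G$ is finite the fibers are twisted forms of $\sB_{T}G_{\xi}$ with $G_{\xi}$ finite, hence $\rho$ is proper.

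To finish, I would establish algebraicity and the remaining claims. Choose an \'etale atlas $U\lra X$. Since $\rho$ is \'etale and surjective, the composite $U\lra X\thickslash G$ is \'etale and surjective; and since $U$ is a scheme, for any scheme $V\lra X\thickslash G$ the fiber product $U\times_{X\thickslash G}V$ carries only trivial automorphisms and is therefore an algebraic space, so $U\lra X\thickslash G$ is a representable atlas. Then $U\times_{X\thickslash G}U$ is an algebraic space \'etale over $U$, yielding an \'etale groupoid presentation; this shows $X\thickslash G$ is a DM-stack, with finite type inherited from $X$ (compatibly with the fact, recalled from \cite{ACV}, that rigidification quotient maps are \'etale). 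Uniqueness up to equivalence is the universal property: any morphism $X\lra Z$ sending each $G_{\xi}$ to the identity factors uniquely through $\rho$, which characterizes $X\thickslash G$.

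I expect the main obstacle to be the gerbe property (1), namely proving that $\rho$ is genuinely an \'etale gerbe with band exactly $G$. The delicate point is controlling the stackification: one must check that sheafifying $\operatorname{Hom}_{X}(\xi,\eta)/G_{\xi}$ neither creates nor destroys morphisms beyond the intended $G$-ambiguity, so that local essential surjectivity and the local lifting of isomorphisms both hold, and simultaneously that $\sA ut_{T}(\xi)/G_{\xi}$ stays finite \'etale so the resulting stack remains Deligne--Mumford. Once the gerbe structure and the automorphism computation in (2) are secured, the algebraicity, properness, and uniqueness assertions are comparatively formal consequences.
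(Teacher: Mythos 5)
First, a point of calibration: the paper does not prove this statement at all --- it is imported verbatim as \cite{AOV}, Theorem A.1, and the only argument the authors supply is the short remark afterwards that $X\thickslash G$ is Deligne--Mumford because $\rho$ is \'etale and surjective, so an atlas of $X$ descends. Your proposal is therefore a from-scratch reconstruction of the cited result, and the route you take (quotient the $\operatorname{Hom}$-sheaves by $G_{\xi}$, sheafify, stackify, then verify the gerbe and automorphism properties) is exactly the standard construction carried out in the references \cite{ACV}, \cite{AOV} and \cite{Ro}. The points you handle --- well-definedness of the quotient via conjugation-invariance of $G$ inside $\sI(X)$, the identification $\sA ut_{T}(\rho(\xi))\cong \sA ut_{T}(\xi)/G_{\xi}$, the local essential surjectivity and local lifting of isomorphisms giving the gerbe property, \'etaleness and properness from the corresponding properties of $G$, and uniqueness via the universal property --- are all the right ones and in the right order.

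The one place where your argument as written does not close is the algebraicity step, which is in fact the real content of the cited theorem rather than a formality. You argue that $U\times_{X\thickslash G}V$ ``carries only trivial automorphisms and is therefore an algebraic space''; but being fibered in setoids only makes this fiber product equivalent to a \emph{sheaf}, and concluding that it is an algebraic space presupposes that $X\thickslash G$ is already known to be algebraic --- which is what you are trying to prove. (Likewise, asserting that $\rho$ is \'etale before $X\thickslash G$ is known to be algebraic is premature.) The way this is resolved in \cite{ACV} and \cite{AOV} is to identify $U\times_{X\thickslash G}U$ concretely as a quotient of the algebraic space $U\times_{X}U$ by a free action built from $G$, check that this quotient exists as an algebraic space \'etale over $U$, and only then present $X\thickslash G$ by the resulting \'etale groupoid. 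If you add that identification, your sketch becomes a complete proof; you also correctly isolated the other delicate point, namely that stackification must not introduce morphisms beyond the intended $G$-ambiguity.
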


\begin{rem}
From (\cite{AOV} Theorem A.1) we already know that $X\thickslash
G$ is an algebraic stack. To show that it is also a DM-stack,
we only need an \'etale atlas. But since the arrow $\rho:X\lra
X\thickslash G$ is $\acute{e}tale$ and surjective, the atlas of
$X\thickslash G$ is obtained from that of $X$.
\end{rem}

We add to this a proposition about pulling back rigidifications along representable morphisms.

\begin{prop}
Let $X$ and $G$ be defined as above, and $Y$ a DM-stack of finite type over $(Sch/S)$. Given any representable morphism $Y\lra X\thickslash G$, the pullback
    \[
    \tilde{\rho}: Y\times_{X\thickslash G}X \lra Y
    \]
    of $\rho$ satisfies the properties in the above theorem, hence
    is a rigidification of $Y\times_{X\thickslash G}X$ along the
    pull-back $G_{Y}:=(Y\times_{X\thickslash G}X)\times_{X}G$.
\end{prop}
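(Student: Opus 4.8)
The plan is to verify directly that $\tilde\rho$ satisfies the two properties listed in Theorem \ref{rigid} with respect to the group $G_Y$, and then to invoke the uniqueness clause of that same theorem to identify $\tilde\rho : Z \lra Y$ (where $Z := Y\times_{X\thickslash G}X$) with the rigidification $Z \lra Z\thickslash G_Y$, i.e. to conclude $Y \simeq Z\thickslash G_Y$ compatibly with $\tilde\rho$. First I would fix notation: write $f : Y \lra X\thickslash G$ for the given representable morphism and $p : Z \lra X$ for the projection, so that an object of $Z(T)$ is a triple $\eta = (y,\xi,\sigma)$ with $y\in Y(T)$, $\xi\in X(T)$, and $\sigma : f(y)\xrightarrow{\sim}\rho(\xi)$ an isomorphism in $(X\thickslash G)(T)$; here $\tilde\rho(\eta)=y$ and $p(\eta)=\xi$. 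An automorphism of $\eta$ is then a pair $(a,b)$ with $a\in\sA ut_T(y)$ and $b\in\sA ut_T(\xi)$ satisfying the compatibility $\rho(b)\circ\sigma=\sigma\circ f(a)$, and $\tilde\rho$ acts on automorphisms by $(a,b)\mapsto a$.

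Property (1), that $Z$ is an \'etale gerbe over $Y$, is formal. By Definition \ref{gerbe} this is tested after pulling back along an arbitrary morphism $V\lra Y$ from a scheme $V$. Since fiber products compose, $V\times_Y Z \simeq V\times_{X\thickslash G}X$, which is precisely the pullback of the \'etale gerbe $\rho : X\lra X\thickslash G$ along the composite $V\lra Y\lra X\thickslash G$. Because $X$ is an \'etale gerbe over $X\thickslash G$ by Theorem \ref{rigid}(1), its pullback to the scheme $V$ is again an \'etale gerbe, so $\tilde\rho : Z\lra Y$ is an \'etale gerbe.

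The content of the argument is Property (2), and this is the step I expect to require the most care, since it is pure automorphism-group bookkeeping in the fiber product. Given $a\in\sA ut_T(y)=\sA ut_T(\tilde\rho(\eta))$, the representability of $f$ lets me view $f(a)$ inside $\sA ut_T(f(y))$, and conjugation by $\sigma$ produces $\sigma\circ f(a)\circ\sigma^{-1}\in\sA ut_T(\rho(\xi))$. By Theorem \ref{rigid}(2) the map $\rho:\sA ut_T(\xi)\lra\sA ut_T(\rho(\xi))$ is surjective, so there exists $b\in\sA ut_T(\xi)$ with $\rho(b)=\sigma\circ f(a)\circ\sigma^{-1}$; the pair $(a,b)$ is then an automorphism of $\eta$ lifting $a$, which proves surjectivity of $\tilde\rho$ on automorphisms. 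For the kernel, $(a,b)$ maps to $\mathrm{id}_y$ exactly when $a=\mathrm{id}_y$, in which case the compatibility forces $\rho(b)=\mathrm{id}$, i.e. $b\in\ker\rho=G_\xi$; so the kernel is canonically $\{(\mathrm{id}_y,b):b\in G_\xi\}\cong G_\xi$. Finally, since $G_Y=Z\times_X G$ by definition, its fiber over $\eta$ is the fiber of $G\lra X$ over $\xi=p(\eta)$, namely $G_\xi$, so $\ker\bigl(\tilde\rho:\sA ut_T(\eta)\lra\sA ut_T(y)\bigr)=(G_Y)_\eta$, as required.

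It remains to record that the hypotheses of Theorem \ref{rigid} genuinely apply to the pair $(Z,G_Y)$: the identification above exhibits $G_Y$ as a subgroup stack of $\sI(Z)$, and $G_Y\lra Z$ is of finite type and \'etale (indeed finite when $G\lra X$ is), being a base change of $G\lra X$ along $p$. With Properties (1) and (2) verified and these hypotheses in place, the characterization up to equivalence in Theorem \ref{rigid} identifies $\tilde\rho$ with the rigidification of $Z$ along $G_Y$, completing the proof. The only real obstacle is the careful tracking of the compatibility condition in the fiber product so as to match the kernel exactly with $(G_Y)_\eta$; once the objects and arrows of $Z(T)$ are described explicitly, the remaining verifications and the appeal to uniqueness are essentially formal.
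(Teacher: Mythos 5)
Your proposal is correct and follows essentially the same route as the paper: the gerbe property is checked by composing pullbacks, the kernel/surjectivity property is verified by identifying automorphisms in the $2$-fiber product (the paper phrases this as a diagram chase between two exact sequences of automorphism groups, using representability to get the injectivity of the vertical maps, while you compute directly with pairs $(a,b)$ — the same content), and the conclusion is drawn from the uniqueness clause of the rigidification theorem. The only nitpick is that representability of $f$ is not what lets you form $f(a)\in\sA ut_T(f(y))$ (any morphism of stacks does that); it is what would make that map injective, which you do not actually need for the surjectivity step.
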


\begin{proof}
First, any $\sA\lra T$ which is a pullback of
$Y\times_{X\thickslash G}X \lra Y$ along $T\lra Y$ can be viewed
as a pullback of $X\lra X\thickslash G$ along the composition
$T\lra Y\lra X\thickslash G$. Hence, by Definition \ref{gerbe},
$\tilde{\rho}: Y\times_{X\thickslash G}X \lra Y$ is an \'etale
gerbe.

Second, given any $T\lra Y\times_{X\thickslash G}X$, where T is
any scheme, consider the following cartesian diagram:
$$\xymatrix{
  G_{T}\ar[rr]\ar[d] && G_{Y}\ar[rr]\ar[d]         && G\ar[d]\\
  T\ar[rr]^-{(\zeta,\xi,\mu)}  && Y\times_{X\thickslash G}X \ar[rr]^{\psi}\ar[d]_{\tilde{\rho}} && X \ar[d]^{\rho}\\
                    && Y \ar[rr]^{\phi}     && X\thickslash G
  }$$ where $\xi=\psi\circ(\zeta,\xi,\mu)\in X(T)$, $\zeta=\tilde{\rho}\circ(\zeta,\xi,\mu)\in
  Y(T)$ and $\mu: \phi\circ\zeta\simeq\rho\circ\xi$ is an isomorphism in $X\thickslash
  G(T)$. Note that $G_{T}\hookrightarrow\sA ut_{T}(\xi)$ gives the action of $G_{T}$ on $\xi$,
  and lies in the kernel of $\rho: \sA ut_{T}(\xi)\lra\sA
  ut_{T}(\rho(\xi))$. There is an natural action of $G_{T}$
  on $(\zeta,\xi,\mu)$, which acts on $\xi$ and fixes $\zeta$ and
  $\mu$. Hence there is a injection $G_{T}\hookrightarrow\sA
  ut_{T}((\zeta,\xi,\mu))$. Since $G_{T}$ is \'etale and of finite type over T, this implies that
$G_{Y}$ is a subgroup stack of the inertia stack
$I_{Y\times_{X\thickslash G}X}$, and \'etale and finite type over
$Y\times_{X\thickslash G}X$.

Finally, consider the following diagram:
$$\xymatrix{
\emph{id}\ar[r]& G_{T}\ar@{^{(}->}[r] & \sA ut_{T}(\xi)\ar@{>>}[r]
&
\sA ut_{T}(\rho(\xi))\ar[r] & \emph{id} \\
&G_{T} \ar@{^{(}->}[r]\ar[u]^{\simeq} & \sA ut_{T}((\zeta,\xi,\mu))
\ar@{>>}[r]\ar[u]^{f} & \sA ut_{T}(\tilde{\rho}(\zeta,\xi,\mu)).
\ar[u]^{g} }$$ Here, $f$ and $g$ are injective since the
corresponding arrow is representable and the first row is exact.
An easy diagram chase shows that the second row is also exact,
which implies that $G_{T}$ is the kernel of $\sA
ut_{T}((\zeta,\xi,\mu))\lra \sA
ut_{T}(\tilde{\rho}(\zeta,\xi,\mu))$.

By the uniqueness of the theorem above, $\tilde{\rho}:
Y\times_{X\thickslash G}X \lra Y$ is the rigidification along
$G_{Y}\lra Y$.
\end{proof}

Now let's consider the stack $\sS_{X}$. For each object $(\xi,G)$
of $\sS_{X}(T)$ over a scheme T, we associate a
subgroup scheme $G\subset \sA ut_{T}((\xi,G))$. Note that $G$ is
finite and \'etale over $T$. Take $(\xi,G)\lra(\xi',G)$ in
$\sS_{X}$ over $T\lra T'$. By the definition of $\sS_{X}$, $G$ is
the pullback of $G'$ along $T\lra T'$. By Appendix A of
\cite{AOV}, there is a unique \'etale finite subgroup stack,
denoted $\Gamma_{X}\subset I_{\sS_{X}}$, where $I_{\sS_{X}}$ is the
inertia stack of $\sS_{X}$, such that for any object $(\xi,G)$ of
$\sS_{X}(T)$ the pullback of $\Gamma_{X}$ to T coincides with G.

The following is the main result of this section.

\begin{prop}
There is an equivalence $\sS_{X}\thickslash\Gamma_{X}\lra \sG_{X}$
of fibered categories, hence we can view $\sG_{X}$ as a
rigidification of $\sS_{X}$ along $\Gamma_{X}\lra\sS_{X}$.
\end{prop}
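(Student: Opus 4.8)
The plan is to verify that the tautological functor $\rho:\sS_X\lra\sG_X$ (forgetting the section) satisfies the two characterizing properties of the rigidification in Theorem \ref{rigid}, applied with $\sS_X$ and $\Gamma_X$ in the roles of $X$ and $G$, and then to invoke the uniqueness clause of that theorem to conclude $\sG_X\simeq\sS_X\thickslash\Gamma_X$. Throughout I use the equivalence $\sS_X\simeq\sS'_X$ established above, so that an object over $T$ may be recorded either as a pair $(\xi,\alpha:G\hookrightarrow\sA ut_T(\xi))$ or as a representable $\phi:\sA\lra X$ from an \'etale gerbe $\sA\simeq\sB_TG$ carrying a section $\tau$, with $\xi=\phi\circ\tau$ and $\rho$ the operation of discarding $\tau$.

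First I would check property (1), that $\rho$ is an \'etale gerbe. By Definition \ref{gerbe} it is enough to pull back along an arbitrary $V\lra\sG_X$ with $V$ a scheme. Such a morphism is an object $(\sA,\phi)\in\sG_X(V)$, and unwinding the fibered product shows that over a $V$-scheme $T$ the objects of $\sS_X\times_{\sG_X}V$ are exactly the sections of the pullback $\sA_T\lra T$; that is, $\sS_X\times_{\sG_X}V\simeq\sA$ as stacks over $V$. Since $\sA\lra V$ is an \'etale gerbe by hypothesis, property (1) follows.

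The heart of the argument is property (2), the automorphism computation. Fix $\sigma=(\xi,\alpha:G\hookrightarrow\sA ut_T(\xi))$ in $\sS_X(T)$. The cartesian condition in Definition \ref{defn:Sx} forces an automorphism of $\sigma$ to be an $F\in\sA ut_T(\xi)$ whose conjugation action preserves the subgroup $\alpha(G)$, so that $\sA ut_T(\sigma)$ is the normalizer $N(\alpha(G))$ in $\sA ut_T(\xi)$; this is consistent with the local chart $[U^H/N(H)]$ found earlier. Discarding the section identifies $\rho(\sigma)$ with the gerbe $(\sA,\phi)$, and I would show that the induced map $\rho:\sA ut_T(\sigma)\lra\sA ut_T(\rho(\sigma))$ is surjective with kernel exactly $\alpha(G)$. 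For the kernel: an $F$ becomes $2$-isomorphic to the identity once $\tau$ is forgotten precisely when it is an inner automorphism coming from an automorphism of the neutral object $\tau$, i.e. when $F\in\alpha(G)$; hence $\ker\rho=\alpha(G)\cong G=(\Gamma_X)_\sigma$ by the defining property of $\Gamma_X$. For surjectivity: given an automorphism $(F,\rho_{\mathrm{nat}})$ of $(\sA,\phi)$, the gerbe property lets me choose an isomorphism between $\tau$ and $F\circ\tau$, and the resulting automorphism of $\xi=\phi\circ\tau$ lies in $N(\alpha(G))$ because $F$ preserves the band of $\sB_TG$, hence $\alpha(G)$ up to conjugation, and it maps to $(F,\rho_{\mathrm{nat}})$.

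I expect the surjectivity-and-kernel computation in the previous paragraph to be the main obstacle: one must carefully match auto-equivalences of $\sB_TG$ compatible with the representable morphism $\phi$ against the normalizer $N(\alpha(G))$, and confirm that forgetting $\tau$ realizes exactly the quotient by $\alpha(G)$ and nothing coarser or finer. Once properties (1) and (2) hold, the identification $\ker\rho=\alpha(G)\cong(\Gamma_X)_\sigma$ shows that $\rho$ collapses precisely $\Gamma_X$, and the uniqueness statement of Theorem \ref{rigid} produces the asserted equivalence $\sS_X\thickslash\Gamma_X\lra\sG_X$.
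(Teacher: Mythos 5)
Your proposal is correct in outline but takes a genuinely different route from the paper. The paper proves the equivalence directly: it constructs a functor $\sS_{X}\thickslash\Gamma_{X}\lra\sG_{X}$ by pulling back the universal gerbe, then builds an explicit inverse by associating to each $(\sA,\phi)\in\sG_X(T)$ a representable morphism $\psi:\sA\lra\sS_X$, gluing the local objects $\rho\circ\psi\circ\zeta_i$ into an arrow $T\lra\sS_{X}\thickslash\Gamma_{X}$ (the cocycle obstruction dies in the rigidification), and checking the resulting square is cartesian. You instead verify that the tautological functor $\sS_X\lra\sG_X$ itself satisfies properties (1) and (2) of Theorem \ref{rigid} relative to $\Gamma_X$ and appeal to the uniqueness clause. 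Your route is cleaner and makes the automorphism-theoretic content explicit (the identification $\sA ut_T(\xi,\alpha)=N(\alpha(G))$ and the exact sequence $1\to\alpha(G)\to N(\alpha(G))\to \sA ut_T(\sA,\phi)\to 1$ are correct and are only implicit in the paper); the paper's route buys, as a byproduct of the two-sided construction, the facts recorded in the subsequent remark --- that $\sS_X$ is the universal gerbe over $\sG_X$ and that $\sG_X$ is a DM-stack --- without ever needing to know in advance that $\sG_X$ is a stack.

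That last point is the one genuine gap you should close. The uniqueness clause of Theorem \ref{rigid} produces the comparison map by factoring $\sS_X\lra\sG_X$ through the stackification defining $\sS_X\thickslash\Gamma_X$, and this factorization requires $\sG_X$ to satisfy descent. Neither your argument nor the statement of the theorem supplies this; you must separately check that gerbes equipped with representable morphisms to $X$ (and their $1$- and $2$-arrows) glue in the \'etale topology, or else argue as the paper does, where the explicit inverse functor renders the question moot. Two smaller points of care: in property (2), surjectivity of $\sA ut_T(\sigma)\lra\sA ut_T(\rho(\sigma))$ must be read as surjectivity of group schemes, since $\tau$ and $F\circ\tau$ are only \emph{locally} isomorphic on $T$ --- your phrase ``the gerbe property lets me choose an isomorphism'' is valid only after an \'etale cover, which is exactly what scheme-theoretic surjectivity permits; and the claim that the lift lands in $N(\alpha(G))$ is better justified by the naturality of $\rho$ applied to $g\in\sA ut_\sA(\tau)$ (giving $\rho(\tau)\alpha(g)\rho(\tau)^{-1}=\phi(F(g))$, then conjugating by $\phi(\lambda)$ back into $\alpha(G)$) than by an appeal to the band.
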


\begin{proof}
Given any scheme T and an arrow
$T\lra\sS_{X}\thickslash\Gamma_{X}$, consider the following
cartesian diagram:
$$\xymatrix{\sA\ar[r]\ar[d]&\sS_{X}\ar[r]\ar[d]           &X     \\
           T\ar[r]        &\sS_{X}\thickslash\Gamma_{X},
           }$$
where $\sA$ is the gerbe obtained by pulling back along
$T\lra\sS_{X}\thickslash\Gamma_{X}$. Also, note that the arrows
$\sA\lra\sS_{X}$ and $\sS_{X}\lra X$ are representable, so their
composition $\sA\lra\sS_{X}\lra X$ is also representable. Since $\Gamma_{X}$ is finite
\'etale over $\sS_{X}$, by the above theorem the arrow $\sS_{X}\lra
\sS_{X}\thickslash\Gamma_{X}$ is \'etale and proper, hence $\sA$ is an \'etale proper gerbe over T with
a representable arrow $\sA\lra X$. This construction extends to an
arrow $\sS_{X}\thickslash\Gamma_{X}\lra \sG_{X}$ of fibered
categories.

To show that the arrow $\sS_{X}\thickslash\Gamma_{X}\lra \sG_{X}$
constructed above is an equivalence, we construct its inverse.
Given any object in $\sG_{X}(T)$
$$\xymatrix{\sA\ar[r]^{\phi}\ar[d]^{\pi} & X\\
            T,                            &
            }$$
consider a $T$-scheme $U$, and an arrow $\zeta: U\lra \sA$. Denoting
$\phi(\zeta)$ by $\xi$ for simplicity, since $\phi$ is representable we have an
injection $\alpha: \sA ut_{U,\sA}(\zeta)\hookrightarrow\sA
ut_{U,X}(\xi)$. Hence we have a pair $(\xi,\alpha)$ which is an %%%%% here we need to show that Aut(\zeta) is finite and etale over U, which I'm not very sure.
object in $\sS_{X}(U)$. Note that this construction gives an arrow
of stacks $\psi: \sA\lra\sS_{X}$. From this construction, it
is easy to see that the composition $\sA\lra\sS_{X}\lra X$ is the
representable arrow $\phi$.  By Lemma 1.3 in \cite{V}, $\psi$ is
also representable.

Next we will construct an arrow $T\lra
\sS_{X}\thickslash\Gamma_{X}$ and show that $\sA\lra T$ is the
pullback of $\sS_{X}\lra \sS_{X}\thickslash\Gamma_{X}$ along this
arrow, proving our result.

According to the definition of a gerbe, we can choose a covering
$\{T_{i}\}$ of T such that there exists $\zeta_{i}\in
\sA(T_{i})$ non-zero and isomorphisms $f_{ij}: \zeta_{i}|_{T_{ij}}\simeq
\zeta_{j}|_{T_{ij}}$, where $T_{ij}\simeq T_{i}\times_{T}T_{j}$
and $f_{ji}=f_{ij}^{-1}$. Note that $f_{ki}\circ f_{jk}\circ
f_{ij}=g_{i}\in \sA ut_{T_{i},\sA}(\zeta_{i})|_{T_{ij}}$. Since
$g_{i}$ might not be the identity, $\{f_{ij}\}$ might not satisfy
the cocycle condition, becoming a obstruction to glue the
$\zeta_{i}$. Let us denote $\rho\circ\psi\circ\zeta_{i}$ by $\xi_{i}$, and
$\rho\circ\psi\circ f_{ij}$ by $f_{ij}'$. Note that
$f'_{ij}:\xi_{i}|_{T_{ij}}\backsimeq\xi_{j}|_{T_{ij}}$ gives an isomorphism. Now we
want to glue $\{\xi_{i}\}$, and for this we only need to show that
$f'_{ki}\circ f'_{jk}\circ f'_{ij}=\rho\circ\psi\circ
g_{i}=\emph{id}$. But we have the following exact sequence:
\[\xymatrix{1\ar[r]&\sA ut_{T_{i},\sA}(\zeta_{i})|_{T_{ij}}\ar[r]^-{\psi}&\sA ut_{T_{i},\sS_{X}}(\psi\circ\zeta_{i})|_{T{ij}}\ar[r]^-{\rho}&\sA ut_{T_{i},\sS_{X}\thickslash\Gamma_{X}}(\xi_{i})|_{T_{ij}}\ar[r]&1}\]
which shows that $\rho\circ\psi\circ g_{i}=\emph{id}\in\sA
ut_{T_{i},\sS_{X}\thickslash\Gamma_{X}}(\xi_{i})|_{T_{ij}}$, hence
we can glue $\{\xi_{i}\}$ and get an arrow $\sigma:
T\lra\sS_{X}\thickslash\Gamma_{X}$. The difference of
two choices of $\{f_{ij}\}$ is an element in $\sA
ut_{T_{i},\sA}(\zeta_{i})|_{T_{ij}}$, and becomes the
identity when composed with $\rho\circ\psi$. Hence the arrow
$\sigma: T\lra\sS_{X}\thickslash\Gamma_{X}$ is unique.

The last step is to show that the following diagram is cartesian:
$$\xymatrix{
      \sA\ar[rr]^-{\psi}\ar[d]_{\pi}&&\sS_{X}\ar[d]^{\rho}\\
      T \ar[rr]^-{\sigma}          &&\sS_{X}\thickslash\Gamma_{X}.
}$$
This is equivalent to showing that the following is
cartesian:
$$\xymatrix{
      \sA_{i}\ar[rr]^-{\psi}\ar[d]_{\pi_{i}}&&\sS_{X}\ar[d]^{\rho}\\
      T_{i} \ar[rr]^-{\sigma}
      &&\sS_{X}\thickslash\Gamma_{X},
}$$ where $\sA_{i}\lra T_{i}$ is obtained by pulling back $\sA\lra
T$ along the covering $T_{i}\lra T$. We also note that
$\pi_{i}$ has a section $\zeta_{i}': T_{i}\lra\sA_{i}$, which is
obtained from the following diagram:
$$\xymatrix{T_{i}\ar@/^/[drrr]^{\zeta_{i}}\ar@{-->}[dr]^--{\zeta_{i}'}\ar@/_/[ddr]_{\emph{id}}&&& \\
            & \sA_{i}\ar[rr]\ar[d]^{\pi_{i}}&& \sA\ar[d] \\
            & T_{i}\ar[rr]        && T .\\
}$$

Now, assume we are given a $T_{i}$-scheme $U$ and the diagram
$$\xymatrix{U\ar@/^/[drrr]^{\xi}\ar@{-->}[dr]^--{h}\ar@/_/[ddr]_{f}&&& \\
            & \sA_{i}\ar[rr]^{\psi}\ar[d]_{\pi_{i}}&& \sS_{X}\ar[d]^{\rho} \\
            & T_{i}\ar[rr]^-{\sigma}\ar@/_1pc/[u]_{\zeta_{i}'}        && \sS_{X}\thickslash\Gamma_{X}, \\
}$$ where $\xi=(\xi_{U},G_{U})\in\sS_{X}(U)$.

We need to show that $h=\zeta_{i}'\circ f$ is the unique arrow
making the diagram commutative. Note that $\pi_{i}\circ
h=\pi_{i}\circ\zeta_{i}\circ f=\emph{id}_{T_{i}}\circ f=f$. So we
only need to show that $\xi=\psi\circ h$. But by our construction,
$\sigma\circ f=\rho\circ\psi\circ\zeta_{i}'\circ
f=\rho\circ\psi\circ h$, so $\rho\circ\psi\circ h=\rho\circ\xi$.
Since our construction is by gluing \'etale locally, we can
restrict to a small \'etale neighborhood, and assume that $\rho$
is an isomorphism, hence $\psi\circ h=\xi$. Since both $\pi_{i}$
and $\zeta_{i}'$ are \'etale and surjective, $h$ is unique by our
construction.
\end{proof}

\begin{rem}
\begin{enumerate}
\item In the proof of this proposition, we actually proved that
every pair $(\sA,\psi)$ over a scheme $T$, where $\sA$ is an \'etale
gerbe over T and $\psi: \sA\lra\sS_{X}$ is a representable arrow,
corresponds to an arrow $T \lra \sS_{X}\thickslash\Gamma_{X}$. Also,
$\sS_X$ is the universal gerbe over
$\sS_{X}\thickslash\Gamma_{X}$.

\item We can identify $\sG_X$ with
$\sS_{X}\thickslash\Gamma_{X}$. By Theorem \ref{rigid}, $\sG_{X}$ is a
DM-stack.
\end{enumerate}
\end{rem}

%
%
% Very Twusted Curves and Their Maps
%************************************************************************************************************
\section{Very twisted curves and their maps}
\label{sec:3}

%\subsection{Very Twisted Curves}

Recall from \cite{AGV} that a twisted stable map over a scheme $T$ is given by a family of twisted curves $\sC\lra T$ and $n$ gerbes $\Sigma_i\subset \sC$, with a representable morphism $f:\sC\lra X$.  $\KSTACK{g}{n}{X}{\beta}$ is the moduli stack parameterizing maps of this type with the image of $f$ lying in the curve class $\beta\in H^\ast(X)$.  Our goal is to extend the notion of stable maps into $X$ by allowing the source twisted curve $\sC$ to have generic stabilizers.  This leads us to the following definitions:

\begin{defn}
A \emph{very twisted curve} over a scheme $T$ is an \'etale gerbe $\fC\lra\sC$ over a twisted curve $\Sigma_i\subset\sC\lra T$.  We define a \emph{very twisted stable map} to be a diagram of the form:
\[
\xymatrix{
**[l]\widetilde{\Sigma_i}\subset\fC \ar[r]^{\tilde{f} (rep.)} \ar[d]& X\\
 **[l]\Sigma_i\subset\sC \ar[d]\\
 T\\}
\]
where $\fC\lra\sC\lra T$ is a very twisted curve, the markings $\widetilde{\Sigma_i}\subset\fC$ are given by taking the preimage of $\Sigma_i$, and such that the diagram admits finitely many automorphisms (our stability condition). Stability is equivalent to the corresponding map $\sC\lra\sG_X$ being a twisted stable map, and $\tilde{f}$ factoring through the projection $\fC\lra\sS_X$.
\end{defn}

\begin{rem}
Notice that in our definitions we are using a single gerbe structure over the whole curve.  We are not considering curves with different generic stabilizers over different components.
\end{rem}  

Given a curve class $\beta\in H^\ast(X)$, we want to define the moduli stack $\vkstack$ parametrizing very twisted stable maps with image class $\beta$.  A map $\sC \to \sG_X$ from a twisted curve $\sC$ to our stack $\sG_X$ corresponds to a diagram of the form:
        \[
    \xymatrix{
        \fC \ar[r]^{(rep.)} \ar[d]&X\\
    \sC.\\}
        \]
    Where $\fC \to \sC$ is an \'etale gerbe.  As stated above, \'etale gerbes over twisted curves are our very twisted curves.  Let $\{\beta_j\}\subset H^\ast(\sG_\sX)$ be the collection of curve classes satisfying $\phi_\ast\alpha^\ast\beta_j=\beta$.  We are led to consider, for each j, the diagram
\[
\xymatrix{
**[l] \widetilde{\Sigma_i}\subseteq\fC_j \ar[d]  \ar[r] \ar@/^1pc/[rr]^{\tilde f} & \sS_X \ar[d]^\alpha \ar[r]_\phi & X\\
**[l] \Sigma_i\subseteq\sC_j \ar[r]^f \ar[d]^\pi& \sG_X &\\
\KSTACK{g}{n}{\sG_X}{\beta_j}.\\}
\]
Here $\sC_j$ is the universal twisted curve sitting above $\KSTACK{g}{n}{\sG_X}{\beta_j}$ and $\fC_j$ is the universal very twisted curve given by pulling back.  The compositions:
\[
\xymatrix{
**[l] \widetilde{\Sigma_i}\subseteq\fC_j \ar[d]  \ar[r]  & X\\
\KSTACK{g}{n}{\sG_X}{\beta_j}\\}
\]
give us a parametrization of very twisted stable maps
\[
\xymatrix{
\widetilde C \ar[r]^{(rep)} \ar[d] & X\\
C \ar[d]\\
T\\}
\]
whose images lie in $\beta$ and whose corresponding map $C\lra\sG_X$ has image lying in $\beta_j$.  We get our desired moduli space by taking a disjoint union over all $j$:
\begin{defn} With $X$ as above, let
\[
    \vkstack := \coprod_{j} \KSTACK{g}{n}{\sG_X}{\beta_j}.
\]
\end{defn}
This moduli space has all the nice properties we want given for free by the construction of $\kstack$.  Sitting above it are two universal objects, one corresponding to very twisted stable maps into $X$ by taking $\widetilde\sC$ over each $\KSTACK{g}{n}{\sG_X}{\beta_\sG}$, and the other corresponding to twisted stable maps into $\sG_X$ by taking $\sC$.  We use the following notation to distinguish them:

\begin{align*}
\xymatrix{
\coprod_j \sC_j \ar[d]^\pi \ar[r]^f & \sG_X\\
\coprod_{j}\KSTACK{g}{n}{\sG_X}{\beta_j}\\}
& &\textrm{and}& &
\xymatrix{
\coprod_j \fC_j \ar[d]^{\tilde\pi} \ar[r]^{\tilde f} & X\\
\vkstack.\\}
\end{align*}

\begin{comment}
\textbf{Several questions: if C is DM-stack, does the \'etale assumption
implies that $\widetilde{C}$ will also be the DM-stack? Or maybe we
don't need it to be DM. But when consider the big picture in our
project, the only $\widetilde{C}$ we considering is the pullback of
$\sS_{X}$, ie. the pullback of a DM-stack, which implies that
$\widetilde{C}$ is DM. So maybe we can use \'etale gerbe instead of fppf
gerbe. Once the third property of the theorem of rigidification is
true, then when we pull back, we can keep track of the twisting.
So maybe we can show that the following definition is equivalent
to the above one:}

\begin{defn}
A very twisted curve over C is a pair ($\widetilde{C}$, G) where G is
a relative group scheme \textbf{(what exactly is a relative group scheme?)} such that the
structure arrow $\widetilde{C}\lra C$ is a rigidification
modular G, ie. the structure arrow factor through the isomorphism
$\widetilde{C}\thickslash G \lra C$.
\end{defn}
\end{comment}

%\subsection{Describe connection between the two $\beta$'s in  $\fK_{g,n}(\sG_\sX,\beta)$ and $\KSTACK{g}{n}{\sG_\sX}{\beta}$}

%\subsection{The moduli stack of very twisted stable maps}

%
%
% Equality of GW Theory
%************************************************************************************************************

\section{Equality of the Gromov-Witten Theories}
\label{sec:4}

%%%%% comment ends

%
%
% Equality of VF classes
%************************************************************************************************************
%\subsection{Equality of Virtual Fundamental Classes}
%label{ssec:4.1}

As discussed in section \ref{sec:3}, we have moduli spaces with two different universal curves, each part of the diagram
\[
\xymatrix{
\coprod_j \fC_j \ar[d]^\delta \ar@/_2pc/[dd]_{\tilde\pi} \ar[r] \ar@/^2pc/[rr]^{\tilde f} & \sS_X \ar[d] \ar[r] & X\\
\coprod_j \sC_j \ar[r]^f \ar[d]^\pi& \sG_X &\\
\vkstack \ar[d]\\
\mathfrak{M}^{tw}_{g,n}.}
\]
In this section, we show that the construction of the virtual fundamental classes and the evaluation maps give the same answer for $\coprod_{j}\KSTACK{g}{n}{\sG_X}{\beta_j}$ and $\vkstack$.  The virtual fundamental classes are constructed by looking at the relative obstruction theories $(R\pi_\ast f^\ast T_{\sG_X})^\vee$ and $(R\tilde\pi_\ast\tilde f^\ast T_{X})^\vee$ over $\mathfrak{M}^{tw}_{g,n}$ as in \cite{B}, \cite{BF} and \cite{AGV}.  We simply need to show that $R\pi_\ast f^\ast T_{\sG_X} \cong R\tilde\pi_\ast\tilde f^\ast T_{X}$.  Since push-forward is functorial, it will be enough to show that $f^\ast T_{\sG_X} \cong \delta_\ast\tilde f^\ast T_{X}$, for each $j$, in the diagram:
\[
\xymatrix{
\fC_j \ar[d]^\delta \ar[r] \ar@/^2pc/[rr]^{\widetilde f} & \sS_X \ar[d] \ar[r] & X\\
\sC_j \ar[r]^f & \sG_X .\\}
\]

  To do this, we prove an analogue to the tangent bundle lemma (Lemma 3.6.1) of \cite{AGV}.  We follow their proof closely.

\begin{lem}
Let $S$ be a scheme, and $f:S\lra \sG_X$ a morphism with its associated diagram
\[
\xymatrix{
\sA \ar[d]^\delta \ar@/^/[r]^{\widetilde f} & X\\
S .\\}
\]
Then there is a canonical isomorphism $\delta_\ast(\widetilde f^\ast(T_X)) \cong f^\ast(T_{\sG_X})$.
\end{lem}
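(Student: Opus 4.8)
The plan is to realize the data $(\sA,\widetilde f,\delta)$ as pulled back from the universal gerbe of Section~\ref{sec:2}. Since $\sG_X\cong\sS_X\thickslash\Gamma_X$ and $\sS_X$ is the universal gerbe over it, the morphism $f:S\lra\sG_X$ exhibits $\sA$ as the fibre product $S\times_{\sG_X}\sS_X$; writing $p:\sA\lra\sS_X$ for the projection and $\alpha:\sS_X\lra\sG_X$, $\phi:\sS_X\lra X$ for the two tautological morphisms, we have a cartesian square
\[
\xymatrix{
\sA \ar[r]^p \ar[d]_\delta & \sS_X \ar[d]^\alpha\\
S \ar[r]^f & \sG_X
}
\]
with $\widetilde f=\phi\circ p$ and $\delta$ an \'etale gerbe banded by $\Gamma:=p^\ast\Gamma_X$. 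Everything is then reduced to two tangent-bundle identifications on $\sS_X$, which I would pull back along $p$ and push forward along $\delta$.

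The first identification is that $\alpha$, being the rigidification morphism $\sS_X\lra\sS_X\thickslash\Gamma_X$, is \'etale by Theorem~\ref{rigid}; hence $d\alpha$ gives a canonical isomorphism $\alpha^\ast T_{\sG_X}\cong T_{\sS_X}$. The second, and the crux of the argument, is a canonical isomorphism
\[
T_{\sS_X}\;\cong\;(\phi^\ast T_X)^{\Gamma_X},
\]
identifying the tangent bundle of $\sS_X$ with the $\Gamma_X$-invariant part of $\phi^\ast T_X$, where $\Gamma_X\subset I_{\sS_X}$ acts on $\phi^\ast T_X$ through the representable morphism $\phi$. I would verify this using the local charts of the smoothness proposition: locally $X=[U/G_x]$ and $\sS_X=[U^H/N(H)]$ with $\phi$ induced by $U^H\hookrightarrow U$, and there the claimed isomorphism is exactly the equality $T_{U^H}=(T_U)^H$ established in that proof (the splitting uses $\mathrm{char}\,k=0$, so $H$ is reductive). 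The work here is to check that this local isomorphism is independent of the chart and compatible with the $N(H)$-action, so that it descends to a global, functorial isomorphism of sheaves on $\sS_X$; this compatibility is the main obstacle.

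Granting these, I would pull both isomorphisms back along $p$ and use the relation $\alpha\circ p=f\circ\delta$ from the cartesian square to obtain, on $\sA$,
\[
\delta^\ast f^\ast T_{\sG_X}\;\cong\;p^\ast\alpha^\ast T_{\sG_X}\;\cong\;p^\ast T_{\sS_X}\;\cong\;p^\ast\big((\phi^\ast T_X)^{\Gamma_X}\big)\;\cong\;(\widetilde f^\ast T_X)^{\Gamma},
\]
the last step using $\widetilde f=\phi\circ p$ together with the fact that the flat pullback $p^\ast$ commutes with taking $\Gamma$-invariants.

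Finally I would apply $\delta_\ast$. Since $\delta$ is an \'etale gerbe banded by $\Gamma$, in characteristic $0$ we have $\delta_\ast\sO_\sA\cong\sO_S$ with vanishing higher direct images, so $\delta_\ast$ is \'etale-locally the functor of $\Gamma$-invariants; in particular $\delta_\ast(\widetilde f^\ast T_X)\cong\delta_\ast\big((\widetilde f^\ast T_X)^{\Gamma}\big)$, while the projection formula gives $\delta_\ast\delta^\ast f^\ast T_{\sG_X}\cong f^\ast T_{\sG_X}$. Combining the displayed chain with these two facts yields the desired canonical isomorphism $\delta_\ast(\widetilde f^\ast T_X)\cong f^\ast T_{\sG_X}$. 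Beyond the chart-independence in the previous step, the remaining point needing care is the justification that pushing forward along the gerbe computes $\Gamma$-invariants and commutes with the identifications above.
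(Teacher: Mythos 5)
Your proposal follows essentially the same route as the paper: reduce to the universal gerbe $\omega:\sS_X\lra\sG_X$ with its two tautological maps to $\sG_X$ and $X$, use flat base change along the cartesian square, and identify the relevant fiber with the $H$-invariants of $T_X$ via the local charts $X=[U/G_x]$, $\sS_X=[U^H/N(H)]$ and the equality $T_{U^H}=(T_U)^H$. The one place you diverge is organizational, and it is precisely what resolves the ``main obstacle'' you flag. Rather than asserting a chart-by-chart isomorphism $T_{\sS_X}\cong(\phi^\ast T_X)^{\Gamma_X}$ and then worrying about independence of charts and $N(H)$-equivariance, the paper first builds a single global, canonical map: apply $\omega_\ast$ to the differential $T_{\sS_X}\lra F^\ast T_X$ and compose with the canonical identification $\omega_\ast T_{\sS_X}\cong T_{\sG_X}$ (valid since $\omega$ is an \'etale gerbe) to obtain
\[
T_{\sG_X}\lra \omega_\ast F^\ast T_X .
\]
Being already globally defined, this map only needs to be checked to be an isomorphism on geometric fibers, which is exactly your local computation $T_{U^H}=(T_U)^H$; no compatibility between charts has to be verified. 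You could repair your version the same way by observing that $d\phi:T_{\sS_X}\lra\phi^\ast T_X$ is itself canonical and lands in the $\Gamma_X$-invariants, so the global map exists before any chart is chosen. Your final step (pushforward along the banded gerbe $\delta$ computes $\Gamma$-invariants, plus the projection formula) is correct and is the same flat-base-change/exactness argument the paper quotes from \cite{AV}. So the proof is right in substance; only the globalization step needs to be reordered as above to close the gap you yourself identified.
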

\begin{proof}
We showed above that by rigidification we have the universal gerbe $\sS_X \lra \sG_X$.  Consider the following diagram of smooth stacks:
\[
\xymatrix{
\sS_X \ar[d]^\omega \ar[r]^F & X\\
\sG_X .}
\]
Given $f:S\lra \sG_X$ as in the statement of the lemma, we have the diagram
\[
\xymatrix{
\sA \ar[r]^g \ar[d]^\pi \ar@/^2pc/[rr]^{\widetilde f}& \sS_X \ar[d]^\omega \ar[r]^F & X\\
S \ar[r]^f & \sG_X .}
\]
The morphism $\omega$ is flat and, by Lemma 2.3.4 in \cite{AV}, $\omega_\ast$ is exact on coherent sheaves, so for any locally free sheaf $\sF$ on $\sS_X$ we have $f^\ast\omega_\ast\sF = \pi_\ast g^\ast\sF$.  All that is needed is to check that $\omega_\ast F^\ast T_X \cong T_{\sG_X}$.

By applying $\omega_\ast$ to the natural morphism $T_{\sS_X}\lra F^\ast T_X$, we get $\omega_\ast T_{\sS_X}\lra \omega_\ast F^\ast T_X$.  But $\omega: \sS_X\lra \sG_X$ is an \'etale gerbe, so $\omega_\ast T_{\sS_X}\cong T_{\sG_X}$.  Through this isomorphism, we get a morphism $T_{\sG_X}\lra \omega_\ast F^\ast T_X$.  The claim is that this is an isomorphism.  We show this by looking at the geometric points.

The fiber of a geometric point $y$ of $\sG_X$ through $\omega$ can be identified with $BH$ for some subgroup $H \subset G_x$.  Here $x$ is a geometric point of $X$ with stabilizer group $G_x$.  This lifts $y$ to $\sS_X$, mapping $y$ to $x$.  Let $T$ be the pullback of $T_X$ to our lift of $y$.  There is a natural action of $H$ on $T$, and the fiber of $\omega_\ast F^\ast T_X$ at $y$ is given by the space of invariants $T^H$.  We must show that we get an isomorphism of this fiber with the fiber of $T_{G_X}$ at $y$.

We can view $X$ in a local chart around $x$ as $[U/G_x]$.  Then $\sS_X$ has a local chart $[U^H/N(H)]$, where we quotient out by the normalizer subgroup $N(H)$.  But $T_{U^H}=T_U^H$ and $\sS_X\lra \sG_X$ is an \'etale gerbe, so we obtain our desired isomorphism $T_{G_X,y}\cong T^H$.
\end{proof}

\begin{cor}
The virtual fundamental classes $\left[\coprod_j\KSTACK{g}{n}{\sG_X}{\beta_j}\right]^{vir}$ and $\left[\vkstack\right]^{vir}$ are equal.  In particular, for $n=0$ we have equality of the Gromov-Witten invariants:
\[
\int\limits_{\left[\coprod_j\KSTACK{g}{n}{\sG_X}{\beta_j}\right]^{vir}}\!\!\!\!\!\!\!\!\!\!\!\!\!\!\!\!\!\!1\,\,\,\,\,\,\,\,\,\,\,\, = \int\limits_{\virclass{\vkstack}}\!\!\!\!\!\!\!\!\!\!\!\!1.
\]
\end{cor}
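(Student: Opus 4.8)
\emph{Strategy.} The key observation is that, as stacks, $\vkstack$ and $\coprod_j \KSTACK{g}{n}{\sG_X}{\beta_j}$ coincide; only the universal objects sitting over them differ. Consequently the two virtual classes can differ only through their respective relative obstruction theories over $\mathfrak{M}^{tw}_{g,n}$, namely $(R\pi_\ast f^\ast T_{\sG_X})^\vee$ for the twisted stable maps into $\sG_X$ and $(R\tilde\pi_\ast \tilde f^\ast T_X)^\vee$ for the very twisted stable maps into $X$. By \cite{BF} the virtual fundamental class depends only on the obstruction theory, so it suffices to produce a canonical isomorphism of these two perfect obstruction theories; equivalently, after dualizing, a canonical isomorphism $R\pi_\ast f^\ast T_{\sG_X}\cong R\tilde\pi_\ast\tilde f^\ast T_X$ compatible with the maps to the relative cotangent complex $L_{\vkstack/\mathfrak{M}^{tw}_{g,n}}$.

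\emph{Main computation.} First I would compute the right-hand complex using the factorization $\tilde\pi=\pi\circ\delta$. Since derived pushforward is functorial, $R\tilde\pi_\ast\tilde f^\ast T_X\cong R\pi_\ast R\delta_\ast \tilde f^\ast T_X$. The morphism $\delta:\fC_j\lra\sC_j$ is an \'etale gerbe, so by the characteristic-$0$ analogue of Lemma 2.3.4 in \cite{AV} the functor $\delta_\ast$ is exact on coherent sheaves and the higher direct images vanish, giving $R\delta_\ast=\delta_\ast$. Applying the tangent bundle lemma just proved to the universal situation, with $S=\sC_j$ and $\sA=\fC_j$, yields the canonical isomorphism $\delta_\ast\tilde f^\ast T_X\cong f^\ast T_{\sG_X}$. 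Combining these two facts gives $R\tilde\pi_\ast\tilde f^\ast T_X\cong R\pi_\ast f^\ast T_{\sG_X}$, and hence an isomorphism of the two underlying complexes.

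\emph{Main obstacle and conclusion.} The step requiring the most care---and what I expect to be the main obstacle---is verifying that this isomorphism is genuinely an isomorphism of \emph{obstruction theories}, i.e. that it commutes with the canonical morphisms of each complex into $L_{\vkstack/\mathfrak{M}^{tw}_{g,n}}$, rather than being merely an isomorphism of their underlying objects in the derived category. This amounts to checking the naturality of the tangent bundle lemma's isomorphism against the Behrend--Fantechi construction of the obstruction-theory map, and follows from the canonicity of both constructions, but it should be spelled out. Once this is in place, \cite{BF} immediately gives $\virclass{\coprod_j\KSTACK{g}{n}{\sG_X}{\beta_j}}=\virclass{\vkstack}$. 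Finally, for $n=0$ there are no evaluation maps to account for, and each Gromov--Witten invariant is the integral of $1$ against one of these two equal classes, so the displayed equality of invariants follows at once.
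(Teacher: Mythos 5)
Your argument is correct and follows essentially the same route as the paper: the paper likewise observes that the two moduli stacks coincide, reduces the comparison of virtual classes to the isomorphism $R\pi_\ast f^\ast T_{\sG_X}\cong R\tilde\pi_\ast\tilde f^\ast T_X$ via functoriality of pushforward and exactness of $\delta_\ast$ for the \'etale gerbe $\delta$, and then invokes the tangent bundle lemma, declaring the corollary ``immediate.'' Your additional remark that the identification must be checked to be compatible with the maps to $L_{\vkstack/\mathfrak{M}^{tw}_{g,n}}$ is a detail the paper leaves implicit, and flagging it is a point in your favor rather than a divergence in method.
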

\begin{proof}
Immediate from the lemma.
\end{proof}

In general, our Gromov-Witten invariants will be defined by taking $\gamma_i\in A^\ast (\overline{\sI}_\mu(\sG_X))_\Q$, pulling back along the evaluation maps $e_i:\coprod_j\KSTACK{g}{n}{\sG_X}{\beta_j}\lra \overline{\sI}_\mu(\sG_X)$, and taking a product:
    \[
    \left<\gamma_1,\ldots,\gamma_n\right>^X_{g,\beta_j} = \left(\prod_{i=1}^n e_i^\ast\gamma_i \right)\cap \left[\coprod_j\sK_{g,n}(\sG_X,\beta_j)\right]^{vir}.
    \]
    Recall from \cite{AGV} that $\overline{\sI}_\mu(\sG_X)$ is given by the disjoint union
\[
\overline{\sI}_\mu(\sG_X):= \coprod_r \overline{\sI}_{\mu_r}(\sG_X).
\]
Each component $\overline{\sI}_{\mu_r}(\sG_X)$ can be reinterpreted as parametrizing diagrams of the form
\[
\xymatrix{
\sA \ar[d] \ar[r]^\phi & X\\
\Sigma \ar[d] \\
T\\}
\]
where $\Sigma\lra T$ is a gerbe over a scheme $T$ banded by $\mu_r$ and $\phi$ is a representable morphism.  This allows us to extent our evaluation map to a map $\widetilde{e}_i:\vkstack\lra\overline{\sI}_\mu(\sG_X)$.  Given a very twisted stable map $\widetilde f:\fC\lra X$ over $T$, its image $\widetilde e_i(\widetilde f)\in \overline{\sI}_\mu(\sG_X)(T)$ is the object associated with the diagram:
\[
\xymatrix{
\widetilde\Sigma_i \ar[d] \ar[r]^{\widetilde f|_{\widetilde\Sigma_i}} & X\\
\Sigma_i \ar[d] \\
T.\\}
\]
One can see that this agrees with the image $e_i(f)\in \overline{\sI}_\mu(\sG_X)(T)$ of the twisted stable map
$f:\sC\lra\overline{\sI}_\mu(\sG_X)$ given by the diagram:
\[
\xymatrix{
\Sigma_i \ar[d] \ar[r]^{f|_{\Sigma_i}} & \sG_X\\
T.\\}
\]
Our desired equality of Gromov-Witten invariants follows.

\begin{rem}
Although the naive GW-theory described above is the natural first approach to take, there is evidence from the
DT-side of the conjectural correspondence for orbifolds that some adjustment to the theory is needed.
In particular, the moduli spaces for orbifold Calabi-Yau 3-folds in the DT-theory have virtual dimension 0, and
the correspondence would hope for the same to be true on the GW-side.  An example sugested by Jim Bryan
is the following.  Let $E$ be the total space of $\sO_{\PP^1}(-1)\bigoplus \sO_{\PP^1}(-1)$ over $\PP^1$. The quotient $X=\left[ E/(\Z/2\Z)\right]$, where the action along each fiber is component-wise, is an
orbi-Calabi-Yau 3-fold with the zero-section giving an embedded $\widetilde{\PP^1}=\left[\PP^1/(\Z/2\Z)\right]$.
In this case, $\sS_X = X \sqcup \widetilde{\PP^1}$ and $\sG_X = X \sqcup \PP^1$.  Stable maps into $\sG_X$
(in particular, into $\PP^1$) will not have virtual dimension zero.  So even in this straightforward case we have
a virtual fundamental class $\left[\vkstack\right]^{vir}$ with non-zero virtual dimension.  A nicer theory might
adjust the relative obstruction theory used to construct the virtual fundamental class in order to fix the problem
with the virtual dimension.  This is something we hope to look into.
\end{rem}
\bibliographystyle{amsalpha}             % (uses file "plain.bst")
\bibliography{myrefs}       % expects file "myrefs.bib"
\end{document}